\newtheorem{theorem}{Theorem}[section]
\newtheorem{lemma}[theorem]{Lemma}
\newtheorem{conjecture}[theorem]{Conjecture}
\newtheorem{definition}[theorem]{Definition}
\newtheorem{example}[theorem]{Example}
\author{Ruixia Wang\thanks{This work is supported by  the Natural Science Foundation of Shanxi Province (201901D111022)}
  \and Linxin Wu
  \and Wei Meng}
\title[Extremal digraphs on Meyniel-type condition for hamiltonian cycles in balanced bipartite digraphs]{Extremal digraphs on Meyniel-type condition for hamiltonian cycles in balanced bipartite digraphs}
\affiliation{School of Mathematical Sciences, Shanxi University, Taiyuan, Shanxi, 030006, PR China
 }
\keywords{bipartite digraph; degree sum condition; hamiltonian cycle}
\begin{document}
\maketitle
\begin{abstract}
  Let $D$ be a strong balanced digraph on $2a$ vertices.
Adamus et al. have proved that $D$ is hamiltonian if $d(u)+d(v)\ge 3a$ whenever $uv\notin A(D)$ and $vu\notin A(D)$. The lower bound $3a$ is tight. In this paper, we shall show that the extremal digraph on this condition is two classes of digraphs that can be clearly characterized. Moreover, we also show that if $d(u)+d(v)\geq 3a-1$ whenever $uv\notin A(D)$ and $vu\notin A(D)$, then $D$ is traceable. The lower bound $3a-1$ is tight. 
\end{abstract}

\section{Terminology and introduction}

\quad In this paper, we consider finite digraphs without loops and multiple arcs. We shall assume that the reader is familiar with the standard terminology on digraphs and refer the reader to \cite{bang} for terminology not defined here. Let $D$ be a digraph with vertex set $V(D)$ and arc set $A(D)$. For any $x,y\in V(D)$, we will write $x\rightarrow y$ if $xy\in A(D)$, also write $x\leftrightarrow y$ if $x\rightarrow y$ and $y\rightarrow x$. For disjoint subsets $X$ and $Y$ of $V(D)$, $X\rightarrow Y$ means that every vertex of $X$ dominates every vertex of $Y$, $X\Rightarrow Y$ means that there is no arc from $Y$ to $X$ and $X\mapsto Y$ means that both of $X\rightarrow Y$ and $X\Rightarrow Y$ hold. For a vertex set $S\subset V(D)$, we denote by $N^+(S)$ the set of vertices in $V(D)$ dominated by the vertices of $S$; i.e. $N^+(S)=\{u\in V(D): vu\in A(D)\  \mbox{for some}\ v\in S\}.$ Similarly, $N^-(S)$ denotes the set of vertices of $V(D)$ dominating vertices of $S$; i.e.  $N^-(S)=\{u\in V(D): uv\in A(D)\  \mbox{for some}\  v\in S\}.$ If $S=\{v\}$ is a single vertex, the cardinality of $N^+(v)$ (resp. $N^-(v)$), denoted by $d^+(v)$ (resp. $d^-(v)$) is called the out-degree (resp. in-degree) of $v$ in $D$. The degree of $v$ is $d(v)=d^+(v)+d^-(v)$. For a pair of vertex sets $X, Y$ of $D$, define $(X, Y)=\{xy\in A(D) : x\in X, y\in Y\}$. Let $\overleftrightarrow{a}(X, Y)=|(X,Y)|+|(Y,X)|$.

Let $P=y_0y_1\ldots y_k$ be a $(y_0, y_k)$-path of $D$. For $i\neq j$, $y_i, y_j\in V(P)$ we denote by $y_iPy_j$ the subpath of $P$ from $y_i$ to $y_j$. If $0 < i\le k$, then the predecessor of $y_i$ on $P$ is the vertex $y_{i-1}$ and is also denoted by $y^-_i$. If $0\le i<k$, then the successor of $y_i$ on $P$ is the vertex $y_{i+1}$ and is also denoted by $y^+_i$. A $k$-cycle is a cycle of order $k$. A cycle factor in $D$ is a collection of vertex-disjoint cycles $C_1, C_2, \ldots, C_t$ such that $V(C_1)\cup V(C_2)\cup \cdots \cup V(C_t)=V(D)$.

A digraph $D$ is said to be strongly connected or just strong, if for every pair of vertices $x, y$ of $D$, there is a path with endvertices $x$ and $y$. A digraph $D$ is called hamiltonian if it contains a hamiltonian cycle, i.e., a cycle that includes every vertex of $D$. A digraph $D$ is traceable if $D$ possesses a hamiltonian path. A digraph $D$ is semicomplete, if for every pair of vertices $x$, $y$ of $D$, $xy$ or $yx$ (or both) is in $D$.

A digraph $D$ is bipartite when $V(D)$ is a disjoint union of independent sets $V_1$ and $V_2$. It is called  balanced if $|V_1|=|V_2|$. A matching from $V_1$ to $V_2$ is an independent set of arcs with origin in $V_1$ and terminus in $V_2$ ($u_1u_2$ and $v_1v_2$ are independent arcs when $u_1\neq v_1$ and $u_2\neq v_2$). If $D$ is balanced, one says that such a matching is perfect if it consists of precisely $|V_1|$ arcs. If $D$ is bipartite and for every pair of vertices $x, y$ from distinct partite sets, $xy$ and $yx$ are in $D$, then $D$ is called complete bipartite.

The cycle problems for digraphs are one of the central problems in graph theory and its applications \cite {bang}. There are many degree or degree sum conditions for hamiltonicity in digraphs. The following result of Meyniel on the existence of hamiltonian cycles in digraphs is basic and famous.

\begin{theorem}\cite{meyniel}
 Let $D$ be a strong digraph on $n$ vertices where $n\ge 3$. If $d(x)+d(y)\ge 2n-1$ for all pairs of non-adjacent vertices $x, y$ in $D$, then $D$ is hamiltonian.
\end{theorem}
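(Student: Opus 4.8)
\medskip
\noindent\textbf{Proof plan.} The statement above is Meyniel's theorem, and I would prove it by the extremal longest-cycle method, following in spirit the short proof of Bondy and Thomassen. Suppose, for contradiction, that $D$ is strong on $n\ge 3$ vertices, that $d(x)+d(y)\ge 2n-1$ for every pair of non-adjacent vertices $x,y$, and that $D$ is not hamiltonian. A strong digraph contains a cycle, so let $C$ be a \emph{longest} cycle, of length $c$; since $D$ is not hamiltonian, $c<n$ and $R:=V(D)\setminus V(C)\ne\emptyset$. Picking any $w\in R$, a shortest path in $D$ from $V(C)$ to $w$ followed by a shortest path from $w$ to $V(C)$ has all of its internal vertices in $R$, and hence contains a \emph{$C$-bypass}: a path $P=u_0u_1\cdots u_m$ with $u_0,u_m\in V(C)$, $m\ge 2$, and $u_1,\dots,u_{m-1}\in R$. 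Among all $C$-bypasses I would fix one, $P$, with the fewest internal vertices, and set $W=\{u_1,\dots,u_{m-1}\}$.

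The engine of the proof is that $C$ admits no improvement. If some $v\in R$ could be spliced between two consecutive vertices $c_i,c_{i+1}$ of $C$ -- that is, if $c_i\rightarrow v$ and $v\rightarrow c_{i+1}$ -- we would obtain a cycle of length $c+1$; hence for every $v\in R$ the in- and out-neighbours of $v$ on $C$ occupy disjoint shifted slots around $C$, so $\overleftrightarrow{a}(\{v\},V(C))\le c$. An analogous bound holds when the entire path $W$ is inserted between consecutive vertices of $C$, and the remaining rearrangement moves one can imagine -- rerouting a stretch of $C$ through part of $W$, or prolonging the bypass inside $R$ -- are all obstructed by the maximality of $C$ together with the minimality of $P$. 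Each obstructed move forbids a definite batch of arcs among $W$, $V(C)$, and $R$; the Bondy--Thomassen bookkeeping then packages these forbidden arcs so that the neighbourhoods of the endvertices of the bypass, and of certain vertices of $C$ adjacent to them, are confined to small sets.

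With this in hand I would select a non-adjacent pair $\{x,y\}$ -- typically $x$ an internal vertex of $P$, which by minimality of $P$ already misses most of $V(C)$, and $y$ a vertex of $C$ (or of $R$) chosen so that the configurations forbidden above leave $x$ and $y$ jointly incident to at most $2n-2$ arcs, i.e.\ $d(x)+d(y)\le 2n-2$. This contradicts $d(x)+d(y)\ge 2n-1$; hence $D$ is hamiltonian, which proves the theorem.

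The crux is the two middle steps: choosing the bypass, and then the pair $\{x,y\}$, so that the arc count is tight to within one, and arranging the splice-and-reroute arguments so that each of them contributes cleanly, with no double counting. In the undirected Ore--P\'osa setting one closes a near-hamiltonian path by reversing its segment between a crossing pair of chords, but in a digraph a path-segment cannot be reversed, so that device is unavailable; constructing the directed substitute for it is essentially the whole content of the theorem, and that is where I expect nearly all the effort to lie. If the longest-cycle route were to stall, the fallback I would try is the maximal-counterexample approach -- add arcs until one further arc forces hamiltonicity, then exploit the resulting hamiltonian paths -- but turning a hamiltonian path into a hamiltonian cycle is itself the obstruction for digraphs, so I would not expect it to be any easier.
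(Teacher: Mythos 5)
This statement is Meyniel's theorem, which the paper only cites (it is Theorem 1.1, attributed to \cite{meyniel}); the paper contains no proof of it, so there is nothing to compare your argument against except the known literature. Judged on its own terms, what you have written is a correct identification of the standard strategy (the Bondy--Thomassen longest-cycle argument with a minimal $C$-bypass) but it is not a proof: the decisive combinatorial content is absent, and you say so yourself when you write that the two middle steps are ``the crux'' and that this is ``where I expect nearly all the effort to lie.''

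Concretely, the gaps are these. First, the only quantitative bound you actually state is the insertion bound $\overleftrightarrow{a}(\{v\},V(C))\le c$ for a vertex $v\in R$ that cannot be spliced into $C$. By itself this gives $d(v)\le c+2(n-c-1)=2n-c-2$, and pairing two such vertices yields $2(2n-c-2)$, which is nowhere near a contradiction with $2n-1$ when $c$ is small; moreover you cannot guarantee that two non-adjacent vertices both lie in $R$. The actual proof requires the finer apparatus: the bypass must be chosen to minimize the length of the segment of $C$ between its endpoints $u_0$ and $u_m$ (not merely the number of internal vertices), the vertices of that segment must be shown non-adjacent to the internal vertices of the bypass, and the arcs incident to the chosen pair must be partitioned against the slots of $C$ so that each forbidden splice or reroute removes a distinct arc from the count. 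You gesture at this (``the Bondy--Thomassen bookkeeping then packages these forbidden arcs'') without specifying which pair $\{x,y\}$ is taken, which configurations are forbidden, or how double counting is avoided -- and that packaging is the theorem. Second, your plan tacitly assumes a suitable non-adjacent pair exists; if every two vertices of $D$ are adjacent the degree hypothesis is vacuous, and one must separately invoke the fact that a strong semicomplete digraph is hamiltonian. Until the case analysis and the arc count are written out in full, the argument does not establish the result.
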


Recently, there is a renewed interest in various degree conditions for hamiltonicity in bipartite digraphs (see, e.g., \cite{adamus2017}, \cite{adamus2018}, \cite{adamus1}, \cite{adamus2}, \cite{darbinyan}, \cite{meszka}, \cite{wang}, \cite{wang2}, \cite{wang3}). In particular, In \cite{adamus2}, Adamus et al. gave a Meyniel-type sufficient condition for hamiltonicity of balanced bipartite digraphs.

\begin{definition} Let $D$ be a balanced bipartite digraph of order $2a$, where $a\ge 2$. For an integer $k$, we will say that $D$ satisfies the condition $M_k$ when $d(u)+d(v)\ge 3a+k$, for all pairs of non-adjacent vertices $u,v$.
\end{definition}

\begin{theorem}\label{strong and not strong hamiltonian}\cite{adamus2} Let $D$ be a balanced bipartite digraph on $2a$ vertices, where $a\ge 2$. Then $D$ is hamiltonian provided one of the following holds:
\begin{description}
  \item[(a)] $D$ satisfies the condition $M_1$, or
  \item[(b)] $D$ is strong and satisfies the condition $M_0$.
\end{description}
\end{theorem}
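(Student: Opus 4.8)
The strategy is to reduce (a) to (b) and then prove (b) by analysing cycle factors. Note first that Meyniel's theorem is of no help here, since the bipartite threshold $3a$ is strictly below $2(2a)-1=4a-1$. For the reduction, observe that $M_1$ implies $M_0$, so it is enough to show that $M_1$ forces $D$ to be strong. If $D$ is not strong, let $D_1$ be a source and $D_m$ a sink strong component of $D$; choose a vertex $u\in V(D_1)$ and a vertex $w\in V(D_m)$ lying in a common partite set (with a minor modification in the degenerate case where $D_1$ and $D_m$ are single vertices in different parts: take $u$ that source vertex and $w$ any other vertex of its part). Then $N^-(u)\subseteq V(D_1)$ and $N^+(w)\subseteq V(D_m)$, and since these two sets lie in the common opposite part, which has size $a$, the bipartition gives $d(u)+d(w)\le a+a+a=3a$, contradicting $M_1$. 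Thus $M_1$ implies $D$ strong, and (a) follows from (b).

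So assume $D$ is strong, balanced bipartite on $2a$ vertices with parts $V_1,V_2$, and $d(u)+d(v)\ge 3a$ for all non-adjacent $u,v$ -- in particular for every pair inside $V_1$ and every pair inside $V_2$, which already forces $d(v)\ge a$ for all $v$ and forces all but at most one vertex of each part to have degree at least $3a/2$. \emph{Step 1 (a cycle factor exists).} Form the bipartite graph $H$ with classes $\{x':x\in V(D)\}$ and $\{x'':x\in V(D)\}$ and an edge $x'y''$ for each arc $xy$ of $D$; cycle factors of $D$ correspond exactly to perfect matchings of $H$. If $H$ had none, Hall's theorem would give $X\subseteq V(D)$ with $|N^+(X)|<|X|$, hence a set $Z=V(D)\setminus N^+(X)$ with $|Z|>2a-|X|$ receiving no arc from $X$; a short argument -- splitting on whether some arc runs from $Z$ to $X$, and invoking $M_0$ for same-part pairs -- then produces a non-adjacent pair of degree sum $<3a$, a contradiction. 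Hence $D$ has a cycle factor.

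\emph{Step 2 (a minimum cycle factor is one cycle).} Choose a cycle factor $\mathcal F=\{C_1,\dots,C_t\}$ of $D$ with $t$ minimum, and suppose $t\ge 2$. The tool is the standard merging lemma: if $C_k=u_1u_2\cdots u_pu_1$, $C_l=v_1v_2\cdots v_qv_1$ and there are indices $i,j$ with $u_i\to v_{j+1}$ and $v_j\to u_{i+1}$, then $u_1\cdots u_i\,v_{j+1}\cdots v_q v_1\cdots v_j\,u_{i+1}\cdots u_p u_1$ is a single cycle on $V(C_k)\cup V(C_l)$, contradicting the minimality of $t$. Because $u_i$ and $u_{i+1}$ lie in opposite parts of the even-length cycle $C_k$, the two arcs forbidden by the lemma both involve vertices $v_j$ from a single part of $C_l$; this sharpens the obstruction to $d^+_{C_l}(u_i)+d^-_{C_l}(u_{i+1})\le|V(C_l)|/2$ for every $i$ and every $l\ne k$ -- a genuine gain over the general bound $\le|V(C_l)|$. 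Summing these over $l\ne k$, combining with the analogous bound for the arc $u_{i-1}u_i$, and using the trivial bipartite bounds $d^+(x),d^-(x)\le a$, one bounds $d(u_i)$ from above for each $i$; comparing the bounds for two vertices of $C_k$ lying in the same part (hence non-adjacent), after using strong connectivity to ensure that sufficiently many arcs leave $C_k$ -- possibly only after routing through and absorbing intermediate cycles when $t\ge 3$ -- produces a non-adjacent pair of degree sum $<3a$, contradicting $M_0$. Therefore $t=1$, i.e. $D$ is hamiltonian.

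\textbf{Main obstacle.} Step 1 and the merging lemma are routine; the real difficulty is the quantitative part of Step 2. A purely global edge count does not decide the issue -- $M_0$ is consistent with a two-cycle factor at the level of $|A(D)|$ -- so the argument must be local, selecting a specific non-adjacent pair while carefully tracking the bipartition (which vertices lie in $V_1$, which in $V_2$, and which successor/predecessor incidences are thereby excluded). The most delicate case is the one in which all arcs between the two cycles under consideration run in a single direction; strong connectivity must then be exploited to detour through further cycles of $\mathcal F$ and absorb them one at a time. Organising this -- and, above all, choosing which two cycles and which of their vertices to play off against one another -- is where the substance of the proof lies.
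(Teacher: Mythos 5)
This theorem is quoted from \cite{adamus2} and the paper does not reprove it; the closest internal analogue is the proof of Theorem \ref{main result}, which strengthens it. Your architecture matches that of \cite{adamus2} (and of Theorem \ref{main result}): the reduction of (a) to (b) by showing $M_1$ forces strong connectivity is correct and complete as you state it, and Step 1 (cycle factor via Hall, using same-part non-adjacent pairs inside $S$ and inside $V_2\setminus N^+(S)$) is the right argument, the same one carried out in Lemma \ref{cyclefactor}. The sharpened merging obstruction you state, $d^+_{C_l}(u_i)+d^-_{C_l}(u_{i+1})\le |V(C_l)|/2$, is exactly Lemma \ref{cycle number}.

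The genuine gap is in Step 2, and it is not merely an organisational matter. Carrying out the count you describe --- take the shortest cycle $C_1$ with $|V(C_1)|=n_1\le a$, bound $\overleftrightarrow{a}(V(C_1),V(\overline{C}_1))\le n_1(2a-n_1)/2$, pass to the lighter partite side, and pick the two same-part vertices $x_1,x_2$ of $C_1$ with smallest external degree --- yields $d(x_1)+d(x_2)\le 2n_1+(2a-n_1)=2a+n_1\le 3a$. This does \emph{not} contradict $M_0$; it is exactly consistent with it. (It would finish the $M_1$ case, but that case you have already reduced away.) The entire substance of the $M_0$ proof is the equality analysis that your bound forces: $n_1=a$, hence exactly two cycles, $D[V(C_1)]$ complete bipartite on the relevant side, all the Lemma-\ref{cycle number} inequalities tight, after which one must exploit strong connectivity and structural facts (in the degenerate case $n_1=2$ one even needs Theorem \ref{semicomplet bipartite cycle factor}, that a strong semicomplete bipartite digraph with a cycle factor is hamiltonian) to reach a contradiction. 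None of this is present; your own ``main obstacle'' paragraph concedes that the selection of the decisive non-adjacent pair is missing, and the claimed strict inequality $<3a$ is not what the computation delivers. As written, the proposal proves (a) but not (b).
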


In Section 3, we reduce the bound in Theorem \ref{strong and not strong hamiltonian}(b) by 1 and prove that $D$ is either hamiltonian or isomorphic to a digraph in $\mathcal{H}_1$ or the digraph $H_2$, see Examples \ref{example} and \ref{example2} below. From this, we determine the extremal digraph of Theorem \ref{strong and not strong hamiltonian}(b). We also prove that a strong balanced bipartite digraph of order $2a$ satisfying the condition $M_{-1}$ is traceable. Our proofs are based on the arguments of \cite{adamus2}.

\begin{example}\label{example}
For an odd integer $a\ge 3$, let $\mathcal{H}_1$ be a set of bipartite digraphs. For any digraph $H_1$ in $\mathcal{H}_1$, let $V_1$ and $V_2$ be partite sets of $H_1$ such that $V_1$ (resp. $V_2$) is a disjoint union of $S, R$ (resp. $U, W$) with $|S|=|W|=\frac{a+1}{2}$, $|U|=|R|=\frac{a-1}{2}$ and $A(H_1)$ consists of the following arcs:
\begin{description}
  \item[(a)] $rw$ and $wr$, for all $r\in R$ and $w\in W$;
  \item[(b)] $us$ and $su$, for all $u\in U$ and $s\in S$;
  \item[(c)] $ws$, for all $w\in W$ and $s\in S$;
  \item[(d)] there exist $r\in R$ and $u\in U$ such that $ur\in A(H_1)$. For every $r\in R$, $d_{H_1[U]}(r)\geq \frac{a-3}{2}$ and for every $u\in U$, $d_{H_1[R]}(u)\geq \frac{a-3}{2}$.
\end{description}

Note that $H_1$ is strong and satisfies the condition $M_{-1}$, but since $|N^+(S)|=|U|<|S|$, there exists no perfect matching from $V_1$ to $V_2$. Thus, $H_1$ is non-hamiltonian.
\end{example}

\begin{example}\label{example2} Let $H_2$ be a bipartite digraph with partite sets $X=\{x_1, x_2, x_3\}$ and $Y=\{y_1, y_2, y_3\}$. The arc set $A(H_2)$ consist of  the following arcs $x_1y_2$, $y_2x_3$, $x_3y_3$, $y_3x_1$ and the following 2-cycles $x_2\leftrightarrow y_2$, $x_2\leftrightarrow y_3$, $y_1\leftrightarrow x_1$ and $y_1\leftrightarrow x_3$. Note that $H_2$ is strong and the degree of every vertex in $H_2$ is 4. Thus $H_2$ satisfies the condition $M_{-1}$ as $a=3$. Observe that $H_2$ is non-hamiltonian (see Figure 1).
\end{example}

\unitlength 1mm 
\linethickness{0.4pt}
\ifx\plotpoint\undefined\newsavebox{\plotpoint}\fi 
\begin{picture}(133,24)(40,70)
\put(100,88){\circle*{2}}
\put(116,88){\circle*{2}}
\put(132,88){\circle*{2}}
\put(100,75){\circle*{2}}
\put(116,75){\circle*{2}}
\put(132,75){\circle*{2}}
\put(114.61,88.15){\vector(1,0){.07}}\put(101.53,88.15){\vector(-1,0){.07}}\put(101.53,88.15){\line(1,0){13.081}}
\put(114.63,75.13){\vector(1,0){.07}}\put(101.25,75.13){\vector(-1,0){.07}}\put(101.25,75.13){\line(1,0){13.375}}
\put(130.5,74.88){\vector(1,0){.07}}\put(117.5,74.88){\vector(-1,0){.07}}\put(117.5,74.88){\line(1,0){13}}
\put(130.75,88.13){\vector(1,0){.07}}\put(117.38,88.13){\vector(-1,0){.07}}\put(117.38,88.13){\line(1,0){13.375}}
\put(99.88,76.25){\vector(0,1){10.25}}
\put(131.75,76){\vector(0,1){10.63}}
\put(130.63,76.38){\vector(3,-1){.07}}\multiput(100.63,87.13)(.094043887,-.03369906){319}{\line(1,0){.094043887}}
\put(101.5,76.38){\vector(-3,-1){.07}}\multiput(130.75,87.63)(-.08757485,-.033682635){334}{\line(-1,0){.08757485}}
\put(99.17,91.39){\makebox(0,0)[cc]{$x_1$}}
\put(116.14,91.39){\makebox(0,0)[cc]{$y_1$}}
\put(132.76,91.39){\makebox(0,0)[cc]{$x_3$}}
\put(99.17,71){\makebox(0,0)[cc]{$y_3$}}
\put(116.14,71){\makebox(0,0)[cc]{$x_2$}}
\put(132.76,71){\makebox(0,0)[cc]{$y_2$}}
\put(114.73,65.58){\makebox(0,0)[cc]{Figure 1. the digraph $H_2$.}}
\end{picture}


\section{Lemmas}

\quad The proof of the main result will be based on the following several lemmas.

\begin{lemma}\label{cyclefactor}
  Let $D$ be a strong balanced bipartite digraph of order $2a$, where $a\ge 2$. If $D$ satisfies the condition $M_{-1}$, then either $D$ contains a cycle factor or $D$ is isomorphic to a digraph in $\mathcal{H}_1$.
\end{lemma}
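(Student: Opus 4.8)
The plan is to use the standard characterization of when a balanced bipartite digraph has a cycle factor: a cycle factor exists if and only if there is a perfect matching from $V_1$ to $V_2$ \emph{and} a perfect matching from $V_2$ to $V_1$ in the associated bipartite graph of arcs, equivalently (by K\"onig/Hall applied to the bipartite "out-" and "in-"incidence structure) if and only if there is no set $S\subseteq V_i$ with $|N^+(S)|<|S|$ and no set $T\subseteq V_i$ with $|N^-(T)|<|T|$. So I would argue by contraposition: assume $D$ has no cycle factor, and show $D\cong H_1$ for some $H_1\in\mathcal H_1$. By the matching criterion there is a partite set, say $V_1$, and a nonempty $S\subseteq V_1$ with $|N^+(S)|<|S|$ (the case $|N^-(S)|<|S|$ is symmetric by reversing all arcs, which preserves strongness and $M_{-1}$). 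Choose $S$ with $|N^+(S)|$ as small as possible, and among those of maximum size; set $U=N^+(S)\subseteq V_2$, so $|U|\le|S|-1$.

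Next I would extract the structure forced by $M_{-1}$. Write $R=V_1\setminus S$ and $W=V_2\setminus U$. Since $S\mapsto$ only into $U$, every vertex of $S$ has out-degree at most $|U|$, and more importantly any two nonadjacent vertices — in particular any $s\in S$ and any $w\in W$, which are nonadjacent because $w\notin N^+(S)$ would be contradicted only via $s\to w$, so at least $s\not\to w$; one must check $w\not\to s$ or handle the arc — contribute to the degree sum bound $d(s)+d(w)\ge 3a-1$. The key counting step is: $d(s)\le |U| + |U| = 2|U|$ is false in general (in-neighbours of $s$ can be all of $V_2$), so instead bound $d(s)\le |U|+|V_2|=|U|+a$ and $d(w)\le |V_1|+|V_1|=2a$ trivially — these are too weak, so I must instead play $S$ against $W$ collectively, counting $\overleftrightarrow a(S,W)$ and using that arcs from $S$ to $W$ are forbidden. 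This forces $|U|=|S|-1$, $|S|=|W|=\frac{a+1}2$, $|U|=|R|=\frac{a-1}2$ (so $a$ is odd), and then that $R\leftrightarrow W$ completely, $U\leftrightarrow S$ completely, and $W\mapsto S$, i.e. items (a)–(c) of Example~\ref{example}; strongness then yields at least one arc from $U$ to $R$, and the residual degree budget forces the minimum-degree conditions in (d).

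The main obstacle I anticipate is the middle counting step: squeezing the inequality $|N^+(S)|\le|S|-1$ together with the $M_{-1}$ degree-sum bound to pin down the sizes \emph{exactly} and to show the bipartite subdigraphs between $R,W$ and between $S,U$ are complete with 2-cycles, rather than merely dense. The slack here is razor-thin (we have lost a full unit from $M_1$ down to $M_{-1}$), so the argument must account for every missing arc: each non-adjacency between a vertex of $S\cup R$ and a vertex of $U\cup W$ costs the degree sum, and the total arc count inside $D[S\cup U]$, $D[R\cup W]$, and across $(W,S)$ must saturate the bound, leaving exactly the freedom described in (d). I would organize this as: (i) establish $W\Rightarrow S$ and hence the bipartition $V_1=S\cup R$, $V_2=U\cup W$ behaves block-triangularly; (ii) apply $M_{-1}$ to pairs $(s,w)$ to force $|S|=|W|$ and completeness of $S\leftrightarrow U$ and $R\leftrightarrow W$; (iii) use strongness to get the $U\to R$ arc and read off (d). Throughout I can lean on Theorem~\ref{strong and not strong hamiltonian} only as motivation; the lemma itself is purely about cycle factors, so no hamiltonicity input is needed.
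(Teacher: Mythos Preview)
Your overall plan---Hall's criterion for a cycle factor, then contrapose and squeeze the structure out of a violating set $S$---is exactly the paper's approach. But the key counting step is executed differently, and your version has a real gap.

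You try to apply $M_{-1}$ to a cross-partite pair $(s,w)$ with $s\in S$, $w\in W$, and you correctly notice the obstacle: although $s\not\to w$, nothing prevents $w\to s$, so the pair may be adjacent and $M_{-1}$ does not apply. Your fallback of ``playing $S$ against $W$ collectively'' is not spelled out, and it is not clear how a global arc count recovers a degree-sum inequality that is only guaranteed for non-adjacent pairs.

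The paper sidesteps this entirely with a move you never consider: in a bipartite digraph, \emph{any two vertices in the same partite set are automatically non-adjacent}. So take two distinct $x_1,x_2\in S$ (possible since strongness forces $|S|\ge 2$) and two distinct $w_1,w_2\in W=V_2\setminus N^+(S)$ (possible since $|S|\le a-1$ forces $|N^+(S)|\le a-2$). Then
\[
3a-1\le d(x_1)+d(x_2)\le 2|N^+(S)|+2a,\qquad 3a-1\le d(w_1)+d(w_2)\le 2a+2(a-|S|),
\]
using only that $d^+(x_i)\le |N^+(S)|$ and $d^-(w_j)\le a-|S|$. These two inequalities give $|N^+(S)|\ge\frac{a-1}{2}$ and $|S|\le\frac{a+1}{2}$; combined with $|N^+(S)|<|S|$ this pins down $a$ odd, $|S|=\frac{a+1}{2}$, $|N^+(S)|=\frac{a-1}{2}$, and equality throughout forces $S\leftrightarrow U$, $R\leftrightarrow W$, $W\to S$ exactly as in (a)--(c). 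Strongness then yields the $U\to R$ arc in (d), and the degree bounds in (d) come from applying $M_{-1}$ to the (genuinely non-adjacent) pairs $(s,r)$ with $s\in S,\,r\in R$ and $(u,w)$ with $u\in U,\,w\in W$.

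So the fix is a one-line change of viewpoint: test $M_{-1}$ on same-partite pairs inside $S$ and inside $W$, not on cross pairs $(s,w)$. Once you do that, the ``razor-thin slack'' you worry about disappears and the sizes fall out immediately with no collective counting needed.
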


\begin{proof}  Let $V_1$ and $V_2$ denote two partite sets of $D$. Observe that $D$ contains a cycle factor if and only if there exist both a perfect matching from $V_1$ to $V_2$ and a prefect matching from $V_2$ to $V_1$. In order to prove that $D$ contains a perfect matching from $V_1$ to $V_2$ and a prefect matching from $V_2$ to $V_1$, by the Hall theorem, it suffices to show that $|N^+(S)|\ge |S|$ for every $S\subset V_1$ and $|N^+(T)|\ge |T|$ for every $T\subset V_2$.

If there exists a non-empty set $S\subset V_1$ such that $|N^+(S)|<|S|$, then we will show that $D$ is isomorphic to a digraph in $\mathcal{H}_1$. Note that $V_2\setminus N^+(S)\neq\emptyset$. If $|S|=1$, write $S=\{x\}$, then $|N^+(S)|<|S|$ implies that $d^+(x)=0$. It is impossible in a strong digraph. Thus $|S|\ge 2$. If $|S|=a$, then every vertex from $V_2\setminus N^+(S)$ has in-degree zero, which again contradicts strong connectedness of $D$. Therefore, $2\le |S|\le a-1$.

For any $x_1, x_2\in S$ and $w_1, w_2\in V_2\setminus N^+(S)$, by the hypothesis of the lemma,
$$3a-1\le d(x_1)+d(x_2)\le 2|N^+(S)|+2a
\eqno(1)$$
and
$$3a-1\le d(w_1)+d(w_2)\le 2a+2(a-|S|).\eqno(2)$$

\noindent From these, we have $|N^+(S)|\ge \frac{a-1}{2}$ and $|S|\le \frac{a+1}{2}$.
If $a$ is even, then $|N^+(S)|\ge \frac{a}{2}$ and $|S|\le \frac{a}{2}$, which is a contradiction to $|N^+(S)|<|S|$. Thus $a$ is odd and $\frac{a-1}{2}\le |N^+(S)|\le |S|-1\le \frac{a+1}{2}-1=\frac{a-1}{2}$, which means $|N^+(S)|=\frac{a-1}{2}$ and $|S|=\frac{a+1}{2}$. Moveover, all equalities hold in (1) and (2), which means that $d^+(x_1)=d^+(x_2)=|N^+(S)|$, $d^-(x_1)=d^-(x_2)=a$, $d^-(w_1)=d^-(w_2)=a-|S|$ and $d^+(w_1)=d^+(w_2)=a$. By the strong connectedness of $D$ and the hypothesis of this lemma, $D$ is isomorphic to a digraph in $\mathcal{H}_1$.
\end{proof}

 From the proof of Theorem 1.2 in \cite{adamus2}, we have the following lemma. We provide its proof for completeness.

\begin{lemma}\label{cycle number}
Let $D$ be a bipartite digraph with partite sets $V_1$ and $V_2$. Suppose that $C_i$ and $C_j$ are two vertex-disjoint cycles in $D$. If $C_i$ and $C_j$ cannot be mergered into a cycle with vertex set $V(C_i)\cup V(C_j)$, then $\overleftrightarrow{a}(V(C_i), V(C_j))\le \frac{|V(C_i)|\cdot|V(C_j)|}{2}$. Moreover, if $\overleftrightarrow{a}(V(C_i), V(C_j))=\frac{|V(C_i)|\cdot|V(C_j)|}{2}$, then for any $x_i\in V(C_i)\cap V_q$ and $x_j\in V(C_j)\cap V_q$, $|\{x_ix_j^+, x_jx_i^+\}\cap A(D)|=1$, with $q\in \{1,2\}$.
\end{lemma}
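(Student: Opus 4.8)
The plan is to argue that any arc between $C_i$ and $C_j$ gives rise to a ``crossing'' configuration that would merge the two cycles, so the non-mergeability hypothesis forces most potential arcs to be absent. Fix the bipartition and write $C_i = u_1 u_2 \ldots u_p u_1$ and $C_j = v_1 v_2 \ldots v_q v_1$ (indices mod $p$, resp.\ mod $q$). The first observation is the classical one used for merging two cycles: if $u_s v_t \in A(D)$ \emph{and} $v_{t-1} u_{s+1} \in A(D)$ (so $v_t$ immediately follows $u_s$ via one crossing arc, and $u_{s+1}$ immediately follows $v_{t-1}$ via another), then
$$
u_1 \ldots u_s\, v_t v_{t+1} \ldots v_{t-1}\, u_{s+1} \ldots u_p u_1
$$
is a cycle on $V(C_i)\cup V(C_j)$, contradicting the hypothesis. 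So for every vertex $x_i$ of $C_i$ and every vertex $x_j$ of $C_j$ lying in the same partite set, the two arcs $x_i x_j^+$ and $x_j x_i^+$ cannot both be present; that is, $|\{x_i x_j^+, x_j x_i^+\}\cap A(D)| \le 1$. Since in a bipartite digraph an arc from $V(C_i)$ to $V(C_j)$ always goes from some $x_i$ to some $x_j^+$ with $x_i, x_j$ in the same partite set, I would set up a bijection between ordered pairs of same-part vertices $(x_i, x_j)$ and ``potential arcs'' $x_i x_j^+$, and likewise $(x_j, x_i) \leftrightarrow x_j x_i^+$.

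Next I would count. The number of ordered pairs $(x_i, x_j)$ with $x_i \in V(C_i)$, $x_j \in V(C_j)$ in the same partite set is exactly $\sum_{q\in\{1,2\}} |V(C_i)\cap V_q|\cdot|V(C_j)\cap V_q|$. A short check (using that both cycles are even, alternating between the two parts) shows this sum equals $\tfrac12 |V(C_i)|\cdot|V(C_j)|$: indeed if $|V(C_i)| = 2\alpha$ and $|V(C_j)| = 2\beta$ then each cycle meets each part in exactly half its vertices, so the sum is $\alpha\beta + \alpha\beta = 2\alpha\beta = \tfrac12(2\alpha)(2\beta)$. Every arc between the two cycles is one of the ``potential arcs'' $x_i x_j^+$ or $x_j x_i^+$ counted above, and by the previous paragraph each unordered pair $\{x_i x_j^+, x_j x_i^+\}$ contributes at most $1$ to $\overleftrightarrow{a}(V(C_i),V(C_j))$. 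Hence
$$
\overleftrightarrow{a}(V(C_i),V(C_j)) \le \sum_{q\in\{1,2\}} |V(C_i)\cap V_q|\cdot|V(C_j)\cap V_q| = \frac{|V(C_i)|\cdot|V(C_j)|}{2},
$$
which is the desired bound. The ``moreover'' part is then immediate: equality in the displayed inequality forces every pair $\{x_i x_j^+, x_j x_i^+\}$ to contribute exactly $1$, i.e.\ exactly one of the two arcs is present, which is precisely the stated condition $|\{x_i x_j^+, x_j x_i^+\}\cap A(D)| = 1$ for all same-part $x_i, x_j$.

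The step I expect to need the most care is making the correspondence between arcs and ``potential pairs'' genuinely bijective rather than merely surjective — one must check that distinct pairs $(x_i, x_j)$ give distinct potential arcs $x_i x_j^+$ (true because $x_j \mapsto x_j^+$ is a bijection on $V(C_j)$, and $x_i$ is recovered as the tail), and that each of the two families (arcs $C_i \to C_j$ versus arcs $C_j \to C_i$) is indexed by the \emph{same} index set of same-part ordered pairs, so that pairing them up as $\{x_i x_j^+,\, x_j x_i^+\}$ is well defined. The merging argument itself is the routine part once the right crossing arcs are identified; the only subtlety there is the degenerate possibility $s+1 \equiv s$ or $t \equiv t-1$, which cannot occur since $p, q \ge 2$ (indeed $p,q\ge 4$ as the cycles are even and the digraph has no $2$-cycles within a part — here using bipartiteness), so the two crossing arcs and the two cycle-arcs they replace are genuinely distinct.
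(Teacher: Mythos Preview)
Your argument is correct and is essentially the same as the paper's: both hinge on the observation that the pair $\{x_ix_j^+,\,x_jx_i^+\}$ can contain at most one arc (else the cycles merge), and then both count arcs by summing these pairs over same-part $(x_i,x_j)$, obtaining $2\cdot\frac{|V(C_i)|}{2}\cdot\frac{|V(C_j)|}{2}$ with equality forcing each pair to contribute exactly one. One small slip: your parenthetical claim that $p,q\ge 4$ is false---a bipartite digraph can have $2$-cycles between the parts---but this is harmless, since the merging construction works verbatim when $|V(C_i)|=2$ or $|V(C_j)|=2$, and your actual argument only needs $p,q\ge 2$.
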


\begin{proof} Let $q\in \{1,2\}, x_i\in V(C_i)\cap V_q$ and $x_j\in V(C_j)\cap V_q$ be arbitrary. Let $x_i^+$ be the successor of $x_i$ in $C_i$ and let $x_j^+$ be the successor of $x_j$ in $C_j$. Let $\mathcal{Z}_q(x_i, x_j)$ be defined as $\{x_ix_j^+, x_jx_i^+\}\cap A(D)$. If $|\mathcal{Z}_q(x_i,x_j)|=2$ for some $x_i, x_j$, then the cycles $C_i$ and $C_j$ can be merged into one cycle by deleting the
arcs $x_ix_i^+$ and $x_jx_j^+$ and adding the arcs $x_ix_j^+$ and $x_jx_i^+$, a contradiction. So we may assume that
$$|\mathcal{Z}_q(x_i, x_j)|\le 1,\  \mbox{for all}\  x_i\in V(C_i)\cap V_q \mbox{\ and \ } x_j\in V(C_j)\cap V_q. \eqno(1)$$
Now, consider an arc $uv\in (V(C_i), V(C_j))$ and assume $u\in V_q$. Let $v^-$ denote the predecessor of $v$ in $C_j$. Then $uv\in \mathcal{Z}_q(u,v^-)$. Similarly, if $uv\in (V(C_j), V(C_i))$, $u\in V_q$, and $v^-$ is the predecessor of $v$ in $C_i$, then $uv\in \mathcal{Z}_q(v^-, u)$. Therefore
$$\overleftrightarrow{a}(V(C_i),V(C_j))\le \sum\limits_{q=1}^{2}\sum\limits_{x_i\in V(C_i)\cap V_q}\sum\limits_{x_j\in V(C_j)\cap V_q} |\mathcal{Z}_q(x_i, x_j)|,$$
and hence, by (1),  $$\overleftrightarrow{a}(V(C_i),V(C_j))\le 2 \cdot  \frac{|V(C_i)|}{2}\cdot \frac{|V(C_j)|}{2}. $$

Moreover, if $\overleftrightarrow{a}(V(C_i), V(C_j))=\frac{|V(C_i)|\cdot|V(C_j)|}{2}$, then the equality holds in (1), that is to say, $|\{x_ix_j^+, x_jx_i^+\}\cap A(D)|=1$, which completes the proof of the lemma.
\end{proof}

The next lemma shows two simple results.

\begin{lemma}\label{number} Let $a_1, a_2, \ldots, a_t$ be non-negative integers with $a_1\le a_2\le \cdots \le a_t$ and let $A$ be a positive integer. If $a_1+a_2+\cdots+a_t\le A$, then the following hold.

\begin{description}
\item (a) For any $l\in \{1,2,\ldots,t\}$, $a_1+a_2+\cdots+a_l\le \frac{lA}{t}$;

\item (b) If $a_1+a_2=\frac{2A}{t}$, then for all $i\neq j \in \{1,2,\ldots,t\}$, we have $a_i=\frac{A}{t}$, $a_i+a_j=\frac{2A}{t}$ and $a_1+a_2+\cdots+a_t=A$.
\end{description}
\end{lemma}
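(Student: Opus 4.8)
The plan is to derive both statements directly from the ordering $a_1\le a_2\le\cdots\le a_t$, using the elementary fact that a non-decreasing finite sequence has non-decreasing prefix averages.

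For part (a), I would show $t\sum_{i=1}^{l}a_i\le l\sum_{i=1}^{t}a_i$ and then divide by $t$. To see this inequality, rewrite its left-hand minus right-hand side as $(t-l)\sum_{i=1}^{l}a_i-l\sum_{i=l+1}^{t}a_i$; since $a_i\le a_l$ for $i\le l$ the first term is at most $(t-l)\,l\,a_l$, and since $a_j\ge a_l$ for $j\ge l+1$ the second term is at least $l\,(t-l)\,a_l$, so the difference is $\le 0$. Combining with the hypothesis $\sum_{i=1}^{t}a_i\le A$ gives $\sum_{i=1}^{l}a_i\le\frac{l}{t}\sum_{i=1}^{t}a_i\le\frac{lA}{t}$, which is (a).

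For part (b), suppose $a_1+a_2=\frac{2A}{t}$; it suffices to treat $t\ge 3$, the only case used. From $a_1\le a_2$ one gets $a_2\ge\frac{A}{t}$, hence $a_i\ge\frac{A}{t}$ for every $i\ge 2$. Then
\[
A\ \ge\ \sum_{i=1}^{t}a_i\ =\ (a_1+a_2)+\sum_{i=3}^{t}a_i\ \ge\ \frac{2A}{t}+(t-2)\cdot\frac{A}{t}\ =\ A ,
\]
so every inequality in this chain is an equality: $\sum_{i=1}^{t}a_i=A$, and $a_i=\frac{A}{t}$ for all $i\ge 3$. In particular $\frac{A}{t}\le a_2\le a_3=\frac{A}{t}$ forces $a_2=\frac{A}{t}$, and then $a_1=\frac{2A}{t}-a_2=\frac{A}{t}$; thus $a_i=\frac{A}{t}$ for every $i$, and therefore $a_i+a_j=\frac{2A}{t}$ for all $i\ne j$.

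This lemma is light, so I do not expect a genuine obstacle. The only points requiring attention are the bookkeeping of the equality cases in part (b) --- confirming that the single hypothesis $a_1+a_2=\frac{2A}{t}$ really propagates to all indices --- and recording that the conclusion ``$a_i=A/t$ for all $i$'' relies on $t\ge 3$ (for $t=2$ the hypothesis says no more than $a_1+a_2=A$), which does no harm since the lemma is invoked only with $t\ge 3$.
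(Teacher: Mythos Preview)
Your proof is correct and mirrors the paper's: for (a) the paper argues by contradiction (a prefix sum exceeding $lA/t$ would force $a_l>A/t$, hence the tail sum would exceed $(t-l)A/t$, impossible), which is just the contrapositive of your direct comparison with $a_l$, and for (b) the squeeze argument is identical. One small correction to your closing remark: in the paper part~(b) is applied with $t=\tfrac{|V(C_1)|}{2}$, which can equal $2$ (e.g.\ when $|V(C_1)|=4$); however, only the conclusions $a_i+a_j=\tfrac{2A}{t}$ and $\sum_i a_i=A$ are actually used, and for $t=2$ these follow immediately from the hypothesis, so the failure of the clause $a_i=\tfrac{A}{t}$ in that case is indeed harmless.
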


\begin{proof}
  (a) For a proof by contradiction, suppose that $a_1+a_2+\cdots+a_l>\frac{lA}{t}$.  Then $a_l>\frac{A}{t}$, as otherwise $a_1\le a_2\le \cdots \le a_l\le\frac{A}{t}$ implies $a_1+a_2+\cdots+a_l\le \frac{lA}{t}$, a contradiction. Then $\frac{A}{t}<a_l\le a_{l+1}\le\cdots \le a_t$ implies $\frac{(t-l)A}{t}<a_{l+1}+\cdots+a_t\le A-(a_1+a_2+\cdots+a_l)<A-\frac{lA}{t}=\frac{(t-l)A}{t},$ a contradiction. Hence $a_1+a_2+\cdots+a_l\le \frac{lA}{t}$.

   (b) If $t=2$, then there is nothing to prove. Now assume $t\ge 3$. First $a_1=a_2=\frac{A}{t}$, as otherwise $a_2>\frac{A}{t}$ implies $a_i>\frac{A}{t}$, for all $i\ge 3$. Then
   $\frac{(t-2)A}{t}< a_3+\cdots+a_t\le A-(a_1+a_2)= \frac{(t-2)A}{t}$, a contradiction. So $a_i\ge\frac{A}{t}$, for all $i\ge 3$.  Then $\frac{(t-2)A}{t}\le  a_3+\cdots+a_t\le A-(a_1+a_2)= \frac{(t-2)A}{t}$. It follows that all equalities hold. Then $a_i=\frac{A}{t}$ for all $i \in \{1,2,\ldots,t\}$. So $a_i+a_j=\frac{2A}{t}$ and $a_1+a_2+\cdots+a_t=A$.
\end{proof}

\begin{theorem}\label{semicomplet bipartite cycle factor}\cite{gutin, haggkvist}
Let $D$ be a strong semicomplete bipartite digraph. If $D$ contains a cycle factor, then $D$ is hamiltonian.
\end{theorem}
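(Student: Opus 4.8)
The statement is classical (it goes back to Gutin and to H\"aggkvist--Manoussakis); here is the line of argument I would follow. Suppose, for a contradiction, that $D$ is strong and semicomplete bipartite and contains a cycle factor, yet $D$ is not hamiltonian. Among all cycle factors of $D$ choose one, $\mathcal{F}=\{C_1,\dots,C_t\}$, with $t$ minimum; then $t\ge 2$, and since $D$ is bipartite each $C_i$ is an even cycle meeting $V_1$ and $V_2$ equally, so $|V(C_i)|=2m_i$ for some $m_i\ge 1$.

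\emph{Step 1 (rigidity between two cycles).} Fix $i\ne j$. If $C_i$ and $C_j$ could be combined into a single cycle on $V(C_i)\cup V(C_j)$, replacing $C_i,C_j$ by that cycle would yield a cycle factor with $t-1$ cycles, contradicting minimality. Hence Lemma~\ref{cycle number} applies and gives $\overleftrightarrow{a}(V(C_i),V(C_j))\le\frac{|V(C_i)|\cdot|V(C_j)|}{2}=2m_im_j$. On the other hand $V(C_i)$ and $V(C_j)$ determine exactly $2m_im_j$ pairs of vertices lying in opposite parts, and each such pair carries at least one arc because $D$ is semicomplete bipartite; thus $\overleftrightarrow{a}(V(C_i),V(C_j))\ge 2m_im_j$. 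So equality holds throughout: between $V(C_i)$ and $V(C_j)$ there is exactly one arc on each opposite-part pair (no $2$-cycles across $C_i$ and $C_j$), and by the ``moreover'' clause of Lemma~\ref{cycle number}, for every $q\in\{1,2\}$ and all $x\in V(C_i)\cap V_q$, $y\in V(C_j)\cap V_q$, exactly one of $xy^{+}$, $yx^{+}$ is an arc of $D$.

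\emph{Step 2 (cyclic arrangement of the cycles).} Contract each $C_i$ to a single vertex to form $D^{*}$ (deleting loops and parallel arcs). Since $D$ is strong, $D^{*}$ is strong; since $D$ is semicomplete bipartite, any two of the sets $V(C_i),V(C_j)$ are joined by an arc, so $D^{*}$ is a strong semicomplete digraph on $t\ge 2$ vertices and is therefore hamiltonian. Relabel so that $C_1\to C_2\to\cdots\to C_t\to C_1$ is a Hamilton cycle of $D^{*}$; in particular, for each $i$ (indices mod $t$) $D$ has an arc from $V(C_i)$ to $V(C_{i+1})$.

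\emph{Step 3 (merging).} The goal is now to assemble $C_1,\dots,C_t$ into a single hamiltonian cycle of $D$, which will contradict the minimality of $\mathcal{F}$. One cannot do this by simply traversing each $C_i$ once and hopping to the next: the dichotomy of Step~1 forces any single crossing step $x^{-}\!\to y$ to be paid for by a reverse relation, so the cycles must be merged by genuinely \emph{interleaving} them --- already when $t=2$ and $|C_1|=|C_2|=4$, the only way to turn two such rigidly linked $4$-cycles into an $8$-cycle cuts each of them into two subpaths that are zipped together using the crossing arcs. The plan is therefore to proceed along the cyclic order $C_1,\dots,C_t$ and, at each stage, to cut the two ``current'' neighbouring cycles at suitably chosen vertices and re-splice them into one longer cycle; each splice requires a crossing arc in a prescribed direction, and the ``exactly one of $xy^{+},yx^{+}$'' alternative of Step~1, together with the arc furnished by $D^{*}$, is precisely what guarantees that at every stage one admissible splice exists. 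Carrying this interleaving out consistently all the way around, and verifying that it terminates in a single spanning cycle rather than a shorter one plus leftover cycles, is the heart of the matter; I expect this to be the main obstacle, with Steps~1 and~2 amounting to bookkeeping that sets it up.
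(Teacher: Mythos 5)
First, a point of reference: the paper does not prove this statement at all --- it is imported as a known theorem of Gutin and of H\"aggkvist--Manoussakis, with citations standing in for a proof. So the question is only whether your sketch constitutes a proof, and it does not. Steps~1 and~2 are correct but cheap: minimality of the cycle factor lets you invoke Lemma~\ref{cycle number}, semicompleteness forces equality $\overleftrightarrow{a}(V(C_i),V(C_j))=2m_im_j$ (hence exactly one arc on each opposite-part pair and the ``exactly one of $xy^{+},yx^{+}$'' alternative on each same-part pair), and contracting the cycles yields a strong semicomplete digraph and thus a cyclic ordering of the $C_i$. But nothing derived up to that point is contradictory --- it is a perfectly consistent-looking configuration --- and the entire content of the theorem is your Step~3, which you explicitly decline to carry out (``is the heart of the matter; I expect this to be the main obstacle'').

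That omission is a genuine gap, not a matter of polish. The strategy ``assemble the $C_i$ into one Hamiltonian cycle, contradicting minimality'' is legitimate in principle, but the rigidity you established in Step~1 is exactly what makes it delicate: every candidate splice $x\to y^{+}$ between two cycles forecloses the companion splice $y\to x^{+}$, so one must prove that strong connectivity plus the equality structure still leaves a coherent global choice of crossing arcs that closes up into a single spanning cycle rather than into a shorter cycle plus leftovers --- and it is precisely conceivable-looking configurations of this kind that a proof must rule out. You give no invariant, no induction, and no case analysis for this; you only assert that the needed splice ``is precisely what guarantees'' success. The published arguments do something concretely different: they prove a path--cycle (or cycle--cycle) merging lemma for semicomplete bipartite digraphs --- if a cycle $C$ and a disjoint path or cycle $Q$ have arcs in both directions between them, then under the semicompleteness hypothesis $V(C)\cup V(Q)$ spans a cycle --- and then induct on the number of cycles in the factor, using strong connectivity to find the required arcs in both directions. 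Until Step~3 is replaced by an actual argument of this kind, what you have is a correct setup for a proof, not a proof.
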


\section{Proof of the main result}

\begin{theorem}\label{main result}
Let $D$ be a strong balanced bipartite digraph of order $2a$ with partite sets $V_1$ and $V_2$, where $a\ge 3$. If $D$ satisfies the condition $M_{-1}$, then $D$ is either hamiltonian or isomorphic to a digraph in $\mathcal{H}_1$ or the digraph $H_2$.
\end{theorem}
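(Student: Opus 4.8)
I would follow the overall strategy of Adamus et al.\ \cite{adamus2}, tracking carefully the single unit of slack introduced by replacing $M_0$ with $M_{-1}$; this slack is precisely what lets the digraph $H_2$ survive, the class $\mathcal{H}_1$ having already been split off in Lemma~\ref{cyclefactor}. First I would make two easy reductions. By Lemma~\ref{cyclefactor}, either $D$ is isomorphic to a member of $\mathcal{H}_1$ --- and we are done --- or $D$ has a cycle factor; assume the latter. If moreover $D$ is semicomplete bipartite, then Theorem~\ref{semicomplet bipartite cycle factor} shows $D$ is hamiltonian, so I may also assume $D$ is not semicomplete bipartite. Choose a cycle factor $\mathcal{F}=\{C_1,\dots,C_t\}$ of $D$ with $t$ minimum, write $|V(C_i)|=2c_i$ (so $\sum_i c_i=a$), and index so that $c_1\le\cdots\le c_t$; if $t=1$ then $D$ is hamiltonian, so from now on $t\ge 2$ and the target is $D\cong H_2$.

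The core tool is the non-mergeability of the cycles of $\mathcal{F}$: by minimality of $t$, no two of them can be merged into one cycle on the union of their vertex sets, so Lemma~\ref{cycle number} applies to every pair $C_i,C_j$. Reading its proof pointwise, for any such pair, any $q\in\{1,2\}$, and any $x\in V(C_i)\cap V_q$ with $C_i$-successor $x^+$, one cannot have both $x\in N^-(x_j^+)$ and $x_j\in N^-(x^+)$ for a vertex $x_j\in V(C_j)\cap V_q$; summing this over $V(C_j)\cap V_q$ gives
\[
|N^+(x)\cap V(C_j)|+|N^-(x^+)\cap V(C_j)|\le c_j ,
\]
and then summing over $j\neq i$ and using $|N^+(x)\cap V(C_i)|\le c_i$, $|N^-(x^+)\cap V(C_i)|\le c_i$ yields the key estimate $d^+(x)+d^-(x^+)\le a+c_i$ for every $x\in V(C_i)$, and symmetrically $d^-(x)+d^+(x^-)\le a+c_i$ (the latter is just the former applied to the $C_i$-predecessor of $x$). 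Reindexing $x\mapsto x^+$ while summing over $V(C_1)$ gives $\sum_{x\in V(C_1)}d(x)\le 2c_1(a+c_1)$, so $C_1$ contains a vertex of degree at most $a+c_1$.

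Next I would pin down $t$ and extract near-equality. Suppose first $c_1\ge 2$. Pick $x\in V(C_1)$ with $d(x)\le a+c_1$; since $|V(C_1)|\ge 4$, the vertex $x^{++}$ (two steps along $C_1$) lies in $V(C_1)$, in the same partite set as $x$, hence is non-adjacent to $x$, and also has degree at most $a+c_1$, so $M_{-1}$ forces $3a-1\le d(x)+d(x^{++})\le 2(a+c_1)$, i.e.\ $2c_1\ge a-1$. Together with $2c_1\le c_1+c_2\le a$ and $\sum 2c_i=2a$ this leaves only $t=2$ with $2c_1\in\{a-1,a\}$, and when $2c_1=a-1$ all the inequalities just used collapse to equalities. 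Suppose instead $c_1=1$, so $C_1=\{u,v\}$ is a $2$-cycle, say $u\in V_1$, $v\in V_2$; now $u,v$ are adjacent and the previous trick is unavailable. From the key estimate, $d^+(u)+d^-(v)\le a+1$ and $d^-(u)+d^+(v)\le a+1$, hence $d(u)+d(v)\le 2a+2$. Applying $M_{-1}$ to $u$ against every other vertex of $V_1$ and to $v$ against every other vertex of $V_2$, summing, and combining with the arc-count bound $|A(D)|\le a^2+\sum_i c_i^2$ (itself a consequence of Lemma~\ref{cycle number}), one gets $\sum_i c_i^2\ge a^2-3a+3$; with $c_1=1$ and Lemma~\ref{number} this forces $t=2$, or else $a=3$ with $\mathcal{F}$ consisting of three $2$-cycles, which I would dispose of by a short direct check (a $4$-regular strong bipartite digraph on $3+3$ admitting a perfect system of $2$-cycles already contains a $6$-cycle or a $4$-cycle plus a $2$-cycle factor). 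So in every surviving case $t=2$, the pair $\{c_1,c_2\}$ lies in a short explicit list, and every relevant inequality is tight up to one unit.

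Finally, with $t=2$ and the degree profile rigid, I would invoke the equality clause of Lemma~\ref{cycle number}: for each same-part pair $x_1\in V(C_1)$, $x_2\in V(C_2)$, exactly one of $x_1x_2^+$, $x_2x_1^+$ is an arc. Combined with the forced extreme degrees, this determines the arcs between $C_1$ and $C_2$, the arcs inside each $C_i$, and the arcs across $V_1,V_2$ completely, up to isomorphism and a few discrete choices. For each resulting configuration one either exhibits a hamiltonian cycle of $D$ --- contradicting $t\ge 2$, so $D$ is hamiltonian --- or checks that, after imposing strong connectedness and $M_{-1}$, the configuration is isomorphic to $H_2$, which in particular forces $a=3$. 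I expect this last step to be the main obstacle: unlike the $M_0$ case of \cite{adamus2}, the weakened hypothesis no longer yields an outright numerical contradiction, so the extremal configurations must genuinely be enumerated and all but $H_2$ eliminated by explicit re-routing arguments; the subcase $c_1=1$ (smallest cycle a $2$-cycle) is the most delicate, since there the degree control is concentrated on the two vertices $u,v$ and one must instead exploit that almost all remaining vertices are then forced to be nearly completely joined to the opposite part.
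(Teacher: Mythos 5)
Your overall skeleton is the same as the paper's (Lemma~\ref{cyclefactor} to split off $\mathcal{H}_1$, a minimum cycle factor, Lemma~\ref{cycle number} to bound the arcs between cycles, degree counting to force $s=2$ and pin down $|V(C_1)|$, then a case analysis), and several of your counting steps are sound --- in particular the estimate $d^+(x)+d^-(x^+)\le a+c_i$ and the arc-count argument $\sum_i c_i^2\ge a^2-3a+3$ in the $c_1=1$ case both check out. But there are two genuine problems. First, in the case $c_1\ge 2$ you pick $x\in V(C_1)$ with $d(x)\le a+c_1$ and assert that $x^{++}$ \emph{also} has degree at most $a+c_1$; nothing you have proved gives this. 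You only control the \emph{average} degree over $V(C_1)$, and the low-degree vertices could all lie in one partite set while $x^{++}$ has large degree, so the inequality $3a-1\le d(x)+d(x^{++})\le 2(a+c_1)$ does not follow. To repair this you need the paper's device: assume WLOG that $V(C_1)\cap V_1$ carries at most half of the arcs between $C_1$ and the rest (inequality (2) in the paper), and then apply Lemma~\ref{number}(a) to the two vertices of $V(C_1)\cap V_1$ minimizing $\overleftrightarrow{a}(\{x_1,x_2\},V(\overline{C}_1))$; these two are automatically non-adjacent and satisfy $d(x_1)+d(x_2)\le 2a+|V(C_1)|$.

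Second, and more seriously, the step you yourself flag as ``the main obstacle'' --- enumerating the tight configurations with $s=2$ and either exhibiting a hamiltonian cycle or recognizing $H_2$ --- is precisely where essentially all of the work of this theorem lies, and you have not carried it out. In the paper this occupies the bulk of the proof: for $|V(C_1)|=2$ one must show that the degrees of $x_1,y_1$ are forced to be exactly $a+1$, that all other vertices have degree at least $2a-2$, that $s=2$ (via an auxiliary application of Theorem~\ref{strong and not strong hamiltonian} to $D-\{x_1,y_1\}$), and then run an explicit re-routing argument using the equality clause of Lemma~\ref{cycle number} that produces $H_2$ exactly when $a=3$ and yields hamiltonian cycles for all $a\ge 4$; for $|V(C_1)|\in\{a-1,a\}$ one needs Claims~2--5 and two further delicate counting subcases to reach contradictions. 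None of these configurations is ``determined completely up to a few discrete choices'' by the equality conditions alone --- substantial further degree bookkeeping (e.g.\ locating the vertex $z$ with $d_{C_1}(z)=0$ via Claim~2, and the $k=1$ analysis in Subcase~2.2) is needed before any enumeration becomes finite. As written, your proposal is a correct high-level plan with one broken step and with the decisive part of the argument missing.
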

\begin{proof} Suppose that $D$ is not isomorphic to a digraph in $\mathcal{H}_1$. By Lemma \ref{cyclefactor}, $D$ contains a cycle factor $C_1, C_2, \ldots, C_s$. Assume that $s$ is minimum possible and $D$ is not hamiltonian. So $s\ge 2$. Without loss of generality, assume that $|V(C_1)|\le |V(C_2)|\le \cdots \le |V(C_s)|$.  Clearly, $|V(C_1)|\le a$.  Denote $\overline{C}_1=D-V(C_1)$. By Lemma \ref{cycle number}, the following holds:

\begin{align}\label{1}
&\overleftrightarrow{a}(V(C_1)\cap V_1,V(\overline{C}_1))+
\overleftrightarrow{a}(V(C_1)\cap V_2, V(\overline{C}_1))\nonumber\\
 &=\overleftrightarrow{a}(V(C_1),V(\overline{C}_1))=
\sum_{i=2}^{s} \overleftrightarrow{a}(V(C_1),V(C_i)) \\
&\le \frac{|V(C_1)|(2a-|V(C_1)|)}{2}.  \nonumber
\end{align}

Without loss of generality, we may assume that

\begin{align}\label{2}
\overleftrightarrow{a}(V(C_1)\cap V_1, V(\overline{C}_1))\le \frac{|V(C_1)|(2a-|V(C_1)|)}{4},
\end{align}
as otherwise
\begin{align}\label{3}
\overleftrightarrow{a}(V(C_1)\cap V_2, V(\overline{C}_1))\le \frac{|V(C_1)|(2a-|V(C_1)|)}{4}.
\end{align}

To complete the proof, we first give the following two claims.

\vskip 0.2cm

{\noindent\bf Claim 1.} For any two non-adjacent vertices $x$ and $y$, if $d(x)\le b$, then $d(y)\ge 3a-1-b$.

\begin{proof} By the hypothesis of this theorem, $d(x)+d(y)\ge 3a-1$. This together with $d(x)\le b$ implies $d(y)\ge 3a-1-b$.
\end{proof}

{\noindent\bf Claim 2.} If $s=2$ and $D[V(C_1)]$ is either a complete bipartite digraph, or a complete bipartite digraph  minus one arc with $|V(C_1)|\ge 6$, then there exists a vertex $z\in V(C_2)$ such that $d_{C_1}(z)=0$.

\begin{proof} Suppose, on the contrary, that for every $z\in V(C_2)$, $d_{C_1}(z)> 0$, where $d_{C_1}(z)=|N^+(z)\cap V(C_1)|+|N^-(z)\cap V(C_1)|$. Since $D$ is strong, there exist arcs from $C_2$ to $C_1$. Without loss of generality, assume that $v\rightarrow x$, where $v\in V(C_2)\cap V_2$ and $x\in V(C_1)\cap V_1$. Let $y$ be an arbitrary vertex in $V(C_1)\cap V_2$.

First, we observe that there exists a hamiltonian path $Q$ from $x$ to $y$ in $D[V(C_1)]$. If $D[V(C_1)]$ is a complete bipartite digraph, it is obvious. Assume that $D[V(C_1)]$ is a complete bipartite digraph minus one arcs, say $e$, with $|V(C_1)|\ge 6$. Denote $m=\frac{|V(C_1)|}{2}$. Let $x_iy_i$, $i=1,2,\ldots, m$, be a perfect matching from $V_1$ to $V_2$ in $D[V(C_1)]$. Without loss of generality, assume $x=x_1$ and $y=y_m$ (This is possible as there are at least $m-1$ arc-disjoint perfect matchings from $V_1$ to $V_2$ in $D[V(C_1)]$ and $m\ge 3$). Let $P=x_1y_1x_2y_2\ldots x_my_m$. If $e\notin \{y_ix_{i+1}: i= 1,2, \ldots, m-1\}$, then $P$ is the desired path. If $e\in \{y_ix_{i+1}: i= 1,2, \ldots, m-1\}$, say $e=y_rx_{r+1}$, then $P\cup \{x_1y_r, y_rx_2, x_ry_1, y_1x_{r+1}\}\setminus \{x_1y_1, y_1x_2, x_ry_r, y_rx_{r+1}\}$ is the desired path.

According to the above observation, we can deduce that $y\nrightarrow v^+$, where $v^+$ is the successor of $v$ in $C_2$, otherwise $vxQyv^+C_2v$ is a hamiltonian cycle of $D$, a contradiction. By the arbitrariness of $y$, this means that $d_{C_1}^-(v^+)=0$. By $d_{C_1}(v^+)>0$, we have $d_{C_1}^+(v^+)>0$. Similarly, we can obtain that $d_{C_1}^-(w)=0$, for every $w\in V(C_2)$, a contradiction to the fact that $D$ is strong. The proof of the claim is complete.
\end{proof}

We now consider the following two cases.

\vskip 0.2cm
{\noindent\bf Case 1.} $|V(C_1)|=2$.
\vskip 0.2cm

Let $V(C_1)\cap V_1=\{x_1\}$ and $V(C_1)\cap V_2=\{y_1\}$.
By (\ref{1}),
       \begin{align}\label{4}
          d_{\overline{C}_1}(x_1)+d_{\overline{C}_1}(y_1)\le 2a-2
       \end{align}
\noindent and by (\ref{2}),
       \begin{align}
         d_{\overline{C}_1}(x_1)\le a-1.
       \end{align}

\noindent So $d(x_1)=d_{C_1}(x_1)+d_{\overline{C}_1}(x_1)\le a+1$.

Assume $d(x_1)\le a$. By Claim 1, $d(z)\ge 3a-1-a=2a-1$, for any $z\in V(D)$ such that $z$ and $x_1$ are non-adjacent. It is easy to see that $D$ is a semicomplete bipartite digraph. By Theorem \ref{semicomplet bipartite cycle factor}, $D$ is hamiltonian, a contradiction.

Now assume $d(x_1)=a+1$. By Claim 1, for any $x'\in V_1\setminus \{x_1\}$, $d(x')\ge 3a-1-(a+1)=2a-2$. By (\ref{4}) and $d_{\overline{C}_1}(x_1)=a-1$, $d_{\overline{C}_1}(y_1)\le a-1$ and so $d(y_1)\le a+1$. Similarly, we can also obtain $d(y_1)=a+1$. Hence, for any $y'\in V_2\setminus\{y_1\}$, $d(y')\ge (3a-1)-(a+1)=2a-2$. In fact, we have shown that
for any $w\in V(D)\setminus \{x_1,y_1\}$, $d(w)\ge 2a-2$.

Assume that $|V(C_2)|=2$. Write $C_2=x_2y_2x_2$, where $x_2\in V_1$ and $y_2\in V_2$. Analogously, we can also obtain that $d(x_2)=d(y_2)=a+1$. Note that $d(x_2)\ge 2a-2$. Thus, $2a-2\le d(x_2)=a+1$. From this, we have $a\le 3$ and so $a=3$ and $s=3$. Write $C_3=x_3y_3x_3$, where $x_3\in V_1$ and $y_3\in V_2$. Analogously, we can also obtain that $d(x_3)=d(y_3)=a+1$. By Theorem \ref{semicomplet bipartite cycle factor}, $D$ is not a semicomplete bipartite digraph. Hence, there exist two vertices from different partite sets such that they are not adjacent. Without loss of generality, assume that $x_1$ and $y_2$ are not adjacent. By $d(x_1)=a+1=4$ and $d(y_2)=a+1=4$, we have that $x_1\leftrightarrow y_3$ and $x_3\leftrightarrow y_2$. By Lemma \ref{cycle number}, $\overleftrightarrow{a}(V(C_2), V(C_3))\le 2$. So $x_2$ and $y_3$ are not adjacent. Then $d(x_2)=a+1=4$ implies that $x_2\leftrightarrow y_1$.  Note that $D$ is hamiltonian, a contradiction (see Figure 2).

\unitlength 1mm 
\linethickness{0.4pt}
\ifx\plotpoint\undefined\newsavebox{\plotpoint}\fi 
\begin{picture}(140,32)(35,70)
\put(84.5,101){\circle*{2}}
\put(97.5,101){\circle*{2}}
\put(84.5,91){\circle*{2}}
\put(97.5,91){\circle*{2}}
\put(84.5,81){\circle*{2}}
\put(97.5,81){\circle*{2}}
\put(96.5,101.25){\vector(1,0){.07}}\put(85.25,101.25){\vector(-1,0){.07}}\put(85.25,101.25){\line(1,0){11.25}}
\put(96,91){\vector(1,0){.07}}\put(85.5,91){\vector(-1,0){.07}}\put(85.5,91){\line(1,0){10.5}}
\put(96,81.5){\vector(1,0){.07}}\put(85.75,81.5){\vector(-1,0){.07}}\put(85.75,81.5){\line(1,0){10.25}}
\put(96.75,100){\vector(3,2){.07}}\put(85.25,92){\vector(-3,-2){.07}}\multiput(85.25,92)(.04831933,.03361345){238}{\line(1,0){.04831933}}
\put(96.25,89.75){\vector(3,2){.07}}\put(85.5,82.25){\vector(-3,-2){.07}}\multiput(85.5,82.25)(.04820628,.03363229){223}{\line(1,0){.04820628}}
\put(96.75,82.5){\vector(2,-3){.07}}\put(85.5,99.75){\vector(-2,3){.07}}\multiput(85.5,99.75)(.033682635,-.051646707){334}{\line(0,-1){.051646707}}
\put(80.5,101){\makebox(0,0)[cc]{$x_1$}}
\put(101.5,101){\makebox(0,0)[cc]{$y_1$}}
\put(80.5,91){\makebox(0,0)[cc]{$x_2$}}
\put(101.5,91){\makebox(0,0)[cc]{$y_2$}}
\put(80.5,81){\makebox(0,0)[cc]{$x_3$}}
\put(101.5,81){\makebox(0,0)[cc]{$y_3$}}
\put(90,73.25){\makebox(0,0)[cc]{Figure 2. The case when $|V(C_2)|=2$.}}
\end{picture}

Next assume that $|V(C_2)|\ge 4$. From this, $|V(C_i)|\ge 4$, for $i=3,\ldots, s$.  Let $D'=D-\{x_1, y_1\}$ and $a'=a-1$. First we claim that $s=2$. It suffices to show that $D'$ is hamiltonian. For $a'=2$ and $a'=3$, it is obvious. Recall that for any $u\in V(D')$, $d_D(u)\ge 2a-2$. Thus, $d_{D'}(u)\ge 2a-4=2a'-2$. Thus, for any two non-adjacent vertices $u$ and $v$ in $D'$, $d_{D'}(u)+d_{D'}(v)\ge 2(2a'-2)$. If $a'\ge 5$, then $2(2a'-2)\ge 3a'+1$. By Theorem \ref{strong and not strong hamiltonian}(a), $D'$ is hamiltonian. If $a'=4$, then $2(2a'-2)\ge 3a'$.  If $D'$ is strong, then by Theorem \ref{strong and not strong hamiltonian}(b), $D'$ is hamiltonian. Next assume that $D'$ is not strong. In this case, $s=3$ and $C_2, C_3$ are both 4-cycles. Write $C_2=x_2y_2x_3y_3x_2$, where $x_i\in V_1$ and $y_i\in V_2$, for $i=2,3$. Since $D'$ is not strong, without loss of generality, assume that $C_2\Rightarrow C_3$. So $d_{D'}(x_2)\le 2+4=2a'-2$ and $d_{D'}(y_2)\le 2a'-2$. Combining this with $d_{D'}(x_2)\ge 2a'-2$ and $d_{D'}(y_2)\ge 2a'-2$, we have that $d_{D'}(x_2)= d_{D'}(y_2)=2a'-2=2a-4$. Recall that $d_D(x_2)\ge 2a-2$ and $d_D(y_2)\ge 2a-2$.
So $x_2\leftrightarrow y_1$ and $x_1\leftrightarrow y_2$. This means that $C_1$ can be merged with $C_2$ by replacing the arc $x_2y_2$ on $C_2$ with the path $x_2y_1x_1y_2$, a contradiction. Hence $s=2$. Write $C_2=x_2y_2\ldots x_ay_ax_2$, where $x_i\in V_1$ and $y_i\in V_2$, for $i=2,\ldots, a$. By Claim 2, there exists a vertex $z\in V(C_2)$ such that $d_{C_1}(z)=0$, say $x_2$. Thus, $x_2$ and $y_1$ are not adjacent and $d(x_2)\le 2a-2$. From this with $d(x_2)\ge 2a-2$, we have that $d(x_2)=2a-2$, which implies that $x_2\leftrightarrow y_i$, for $i=2,\ldots, a$.  Recalling that $d_{C_2}(x_1)=d_{C_2}(y_1)=a-1$, that is to say, $\overleftrightarrow{a}(V(C_1), V(C_2))=2(a-1)=\frac{|V(C_1)|\cdot|V(C_2)|}{2}$.
 By Lemma \ref{cycle number}, for any $x_i, y_i\in V(C_2)$,
\begin{align}\label{6}
|\{x_iy_1, x_1y_i\}\cap A(D)|=1  \mbox{\ and\ } |\{y_{i-1}x_1,y_1x_i\}\cap A(D)|=1.
\end{align}
\noindent Since $y_1$ and $x_2$ are not adjacent, by (6), we have $y_a\rightarrow x_1$ and $x_1\rightarrow y_2$.

First consider the case when $a=3$. By $d_{C_2}(x_1)=d_{C_2}(y_1)=a-1$, we have $x_1\mapsto y_2$ and $y_3\mapsto x_1$ and $y_1\leftrightarrow x_3$. If $x_3\rightarrow y_2$, then $x_3y_2x_2y_3x_1y_1x_3$ is a hamiltonian cycle, a contradiction.  Hence, $y_2\mapsto x_3$. If $y_3\rightarrow x_3$, then $y_3x_3y_1x_1y_2x_2y_3$ is a hamiltonian cycle, a contradiction.
Hence $x_3\mapsto y_3$. Then $D$ is isomorphic to the digraph $H_2$ (see Figure 1.)

Next consider the case when $a\ge 4$. Assume that $x_a\rightarrow y_1$.
By (6), $x_1\nrightarrow y_a$. Furthermore, $y_a\nrightarrow x_3$, otherwise $x_1y_2x_2y_ax_3C_2x_ay_1x_1$ is a hamiltonian cycle, a contradiction. Hence $d(y_a)\le 2a-2$. Combining this with $d(y_a)\ge 2a-2$, we have $d(y_a)=2a-2$, which implies $x_3\mapsto y_a$ and $y_a\leftrightarrow x_i$, for $i=4,\ldots, a$. Moreover, $y_1\nrightarrow x_3$, otherwise $y_1x_3y_ax_1y_2x_2y_3C_2x_ay_1$ is a hamiltonian cycle, a contradiction. From this, we see that $d(x_3)\le 2a-2$. Combining this with $d(x_3)\ge 2a-2$, we have $d(x_3)=2a-2$, which implies $y_3\leftrightarrow x_3$. However, $x_3y_ax_4C_2x_ay_1x_1y_2x_2y_3x_3$ is a hamiltonian cycle, a contradiction. Now we assume $x_a\nrightarrow y_1$. Since $d_{C_2}(y_1)=a-1$ and $y_1$ and $x_2$ are not adjacent, there exists a vertex $x_i\in \{x_3,\ldots, x_{a-1}\}$ such that $x_i\rightarrow y_1$. Take $r=\max\{i: i\in \{3, \ldots, a-1\}\ \mbox{and} \ x_i\rightarrow y_1\}$. By the choose of $r$, for every $j\in \{r+1,\ldots, a\}$, $x_j\nrightarrow y_1$. Then by (6), $x_1\rightarrow y_j$. If $x_j\rightarrow y_2$, then $x_ry_1x_1y_jC_2x_2y_rC_2x_jy_2C_2x_r$ is a hamiltonian cycle, a contradiction. Hence $x_j\nrightarrow y_2$. Combining this with $x_j\nrightarrow y_1$ and $d(x_j)\ge 2a-2$, we have $d(x_j)=2a-2$. Hence $x_j\rightarrow \{y_j,y_{j-1}\}\rightarrow x_j$. But $x_ry_1x_1y_ax_ay_{a-1}\ldots y_rx_2C_2x_r$ is a hamiltonian cycle, a contradiction.

\vskip 0.2cm
{\noindent\bf Case 2.} $|V(C_1)|\ge 4$.
\vskip 0.2cm

In this case, $a\ge 4$. Let $x_1, x_2\in V(C_1)\cap V_1$ be distinct and chosen so that $\overleftrightarrow{a}(\{x_1, x_2\}, V(\overline{C}_1))$ is minimum. By Lemma \ref{number}(a) and (\ref{2}), $\overleftrightarrow{a}(\{x_1,x_2\}, V(\overline{C}_1))\le 2a-|V(C_1)|$, that is to say, $d_{\overline{C}_1}(x_1)+d_{\overline{C}_1}(x_2)\le 2a-|V(C_1)|$. Since any vertex in $C_1$ has at most $|V(C_1)|$ arcs to other vertices in $C_1$ (as there are $\frac{|V(C_1)|}{2}$ vertices from $V_2$ in $C_1$) and $|V(C_1)|\le a$, we get $3a-1\le d(x_1)+d(x_2)\le 2a+|V(C_1)|\le 3a.$ From this $|V(C_1)|=a-1$ or $|V(C_1)|=a$. Before we consider these two cases, we claim the following. Clearly, $s=2$.

%
%
%
%

\vskip 0.2cm
{\noindent\bf Claim 3.} For any $u\in V(C_2)$, $d_{C_1}(u)>0$.

\begin{proof}
  Suppose, on the contrary, that there exists $u_0\in V(C_2)$ such that $d_{C_1}(u_0)=0$.  Then $3a-1\le d(u_0)+d(x_i)\le |V(C_2)|+|V(C_1)|+d_{C_2}(x_i)$, for $i=1,2$. From this, $d_{C_2}(x_i)\ge a-1$. Thus $2(a-1)\le d_{C_2}(x_1)+d_{C_2}(x_2)\le 2a-|V(C_1)|$, which means $|V(C_1)|\le 2$, a contradiction.
\end{proof}

From Claims 2 and $3$, we know that $D[V(C_1)]$ is not a complete bipartite digraph. Let $y_1, y_2\in V(C_1)\cap V_2$ be distinct and chosen such that $\overleftrightarrow{a}(\{y_1, y_2\}, V(C_2))$ is the minimum.

\vskip 0.2cm

{\noindent\bf Claim 4.} If $d_{C_2}(x_1)+d_{C_2}(x_2)=2a-|V(C_1)|$, then $d_{C_2}(y_1)+d_{C_2}(y_2)\le 2a-|V(C_1)|$.

\begin{proof}
If $d_{C_2}(x_1)+d_{C_2}(x_2)=2a-|V(C_1)|$, then by Lemma \ref{number}(b) and (2), $\overleftrightarrow{a} (V(C_1)\cap V_1, V(C_2))=\frac{|V(C_1)| \cdot(2a-|V(C_1)|)}{4}$. Then by (1), $\overleftrightarrow{a} (V(C_1)\cap V_2, V(C_2))\le \frac{|V(C_1)|\cdot (2a-|V(C_1)|)}{4}$. By Lemma \ref{number}(a), $\overleftrightarrow{a} (\{y_1,y_2\}, V(C_2))\le 2a-|V(C_1)|$, that is, $d_{C_2}(y_1)+d_{C_2}(y_2)\le 2a-|V(C_1)|$.
\end{proof}

Now we return to the proof of the theorem and consider the following subcases.

\vskip 0.2cm
{\noindent \bf Subcase 2.1.} $|V(C_1)|=a-1$.
\vskip 0.2cm

In this case, $|V(C_1)|=a-1\ge 4$, that is $a\ge 5$, and $|V(C_2)|=a+1$.

\vskip 0.2cm
{\noindent\bf Claim 5.} For any two non-adjacent vertices $u, v\in V(C_1)$, if $d_{C_2}(u)+d_{C_2}(v)\le 2a-|V(C_1)|$, then $d_{C_2}(u)+d_{C_2}(v)= 2a-|V(C_1)|$ and $d_{C_1}(u)=d_{C_1}(v)=|V(C_1)|$.

\begin{proof}
  By hypothesis,  $3a-1\le d(u)+d(v)=d_{C_2}(u)+d_{C_2}(v)+d_{C_1}(u)+d_{C_1}(v)
\le 2a-|V(C_1)|+2|V(C_1)|=3a-1$. If follows that $d_{C_2}(u)+d_{C_2}(v)= 2a-|V(C_1)|$ and $d_{C_1}(u)=d_{C_1}(v)=|V(C_1)|$.
\end{proof}

By Claim 5, $d_{C_2}(x_1)+d_{C_2}(x_2)= 2a-|V(C_1)|$ and $d_{C_1}(x_1)=d_{C_1}(x_2)=|V(C_1)|$. By (\ref{2}) and Lemma \ref{number}(b),  for any $x', x''\in V(C_1)\cap V_1$, $d_{C_2}(x')+d_{C_2}(x'')=2a-|V(C_1)|$. By Claim 5, $d_{C_1}(x')=d_{C_1}(x'')=|V(C_1)|$. Then $D[V(C_1)]$ is a complete bipartite digraph, a contradiction.

\vskip 0.2cm
{\noindent \bf Subcase 2.2.} $|V(C_1)|=a$.
\vskip 0.2cm

In this case, $|V(C_2)|=a$. By
\begin{align}\label{7}
  3a-1& \le d(x_1)+d(x_2)\nonumber\\
      &  =  d_{C_1}(x_1)+d_{C_1}(x_2)+d_{C_2}(x_1)+d_{C_2}(x_2)\\
      & \le 2a+d_{C_2}(x_1)+d_{C_2}(x_2), \nonumber
\end{align}
\noindent we have $d_{C_2}(x_1)+d_{C_2}(x_2)\ge a-1$. Combining this with $d_{C_2}(x_1)+d_{C_2}(x_2)\le 2a-|V(C_1)|=a$, we have $d_ {C_2}(x_1)+d_{C_2}(x_2)=a-1$ or $d_{C_2}(x_1)+d_{C_2}(x_2)=a$.

First suppose $d_{C_2}(x_1)+d_{C_2}(x_2)=2a-|V(C_1)|=a$. By Lemma \ref{number}(b) and (2), $d_{C_2}(x_i)+d_{C_2}(x_j)=a$, for any $x_i, x_j\in V(C_1)\cap V_1$. Since $D[V(C_1)]$ is not a complete bipartite digraph, there exists a vertex $x'\in V(C_1)\cap V_1$ such that $d_{C_1}(x')\le a-1$. For any $x_k\in (V(C_1)\cap V_1)\setminus\{x'\}$, $3a-1\le d(x')+d(x_k)=(d_{C_1}(x')+d_{C_1}(x_k))+(d_{C_2}(x')+d_{C_2}(x_k))\le (a-1+a)+a=3a-1$. So $d_{C_1}(x')=a-1$ and $d_{C_1}(x_k)=a$, which implies that $D[V(C_1)]$ is a complete bipartite digraph minus one arc. According to Claims 2 and 3, $|V(C_1)|=4$. Write $C_1=x_1y_1x_2y_2x_1$ and $C_2=x_3y_3x_4y_4x_3$, where $x_i\in V_1$ and $y_i\in V_2$, for $i=1,2, 3, 4$. Without loss of generality, assume that $d_{C_1}(x_1)=3$ and $y_2\mapsto x_1$.  According to Claim 4, $d_{C_2}(y_1)+d_{C_2}(y_2)\le a$. Then $3a-1\le d(y_1)+d(y_2)\le 2a-1+a=3a-1$ implies that $d_{C_2}(y_1)+d_{C_2}(y_2)=a$, which means $d_{C_1}(x_3)+d_{C_1}(x_4)=a$. By symmetry, we can deduce that $D[V(C_2)]$ is a complete bipartite digraph minus one arc. Without loss of generality, assume that $d_{C_2}(x_3)=a-1$. Then $3a-1\le d(x_1)+d(x_3)\le 2(a-1)+d_{C_2}(x_1)+d_{C_1}(x_3)$, that is, $d_{C_2}(x_1)+d_{C_1}(x_3)\ge a+1$. Without loss of generality, assume that $d_{C_2}(x_1)\ge \frac{a}{2}+1$. Combining this with $d_{C_2}(x_1)+d_{C_2}(x_2)=a$, we have $d_{C_2}(x_2)\le \frac{a}{2}-1$. Then $3a-1\le d(x_2)+d(x_3)\le (a+\frac{a}{2}-1)+(a-1+d_{C_1}(x_3))$ implies that $d_{C_1}(x_3)\ge \frac{a}{2}+1$. From this with $d_{C_1}(x_3)+d_{C_1}(x_4)=a$, we have $d_{C_1}(x_4)\le \frac{a}{2}-1$. But $d(x_2)+d(x_4)\le 2a+2(\frac{a}{2}-1)=3a-2$, a contradiction.


Now suppose $d_{C_2}(x_1)+d_{C_2}(x_2)=a-1$. From (\ref{7}), $d_{C_1}(x_1)=d_{C_1}(x_2)=a$. If $a=4$, then $D[V(C_1)]$ is a complete bipartite digraph, a contradiction. Next assume that $a\ge 6$. By Claims 2 and 3, $D[V(C_1)]$ is not a complete bipartite digraph minus one arc. Denote $V(C_1)\cap V_1=\{x_1, x_2, \ldots, x_{\frac{a}{2}}\}$ and without loss of generality, assume that $d_{C_2}(x_1)\le d_{C_2}(x_2)\le \cdots \le d_{C_2}(x_{\frac{a}{2}})$.  By the choice of $x_1$ and $x_2$ and $d_{C_2}(x_1)+d_{C_2}(x_2)=a-1$, we know that $d_{C_2}(x_1)\le \frac{a}{2}-1$. Denote $d_{C_2}(x_1)=\frac{a}{2}-k$, with $k\ge 1$. So $d_{C_2}(x_2)=\frac{a}{2}+k-1$ and $d_{C_2}(x_i)\ge \frac{a}{2}+k-1$, for $i=3, \ldots, \frac{a}{2}$. By (\ref{2}),
\begin{align}\label{9}
  d_{C_2}(x_1)+d_{C_2}(x_2)+\cdots +d_{C_2}(x_{\frac{a}{2}})\le \frac{a^2}{4}.
\end{align}
Since $D[V(C_1)]$ is neither a complete bipartite digraph nor a complete bipartite digraph minus one arc, either there exists a vertex $x_i\in V(C_1)\cap V_1$ such that $d_{C_1}(x_i)\le a-2$ or there exist at least two vertices $x_i$ and $x_j$ such that $d_{C_1}(x_i)=a-1$ and $d_{C_1}(x_j)=a-1$. If $a=6$, then $d_{C_1}(x_3)\le a-2$. By $3a-1\le d(x_1)+d(x_3)\le (a+\frac{a}{2}-k)+(a-2+d_{C_2}(x_3))$, we have $d_{C_2}(x_3)\ge \frac{a}{2}+k+1$. So $d_{C_2}(x_1)+d_{C_2}(x_2)+d_{C_2}(x_3)\ge a-1+\frac{a}{2}+k+1=\frac{3a}{2}+k$. According to (\ref{9}), $\frac{3a}{2}+k\le \frac{a^2}{4}$. It is impossible as $k\ge 1$ and $a=6$, a contradiction.
Hence $a\ge 8$.
By $\frac{a}{2}-k+(\frac{a}{2}-1)(\frac{a}{2}+k-1)\le \sum_{i=1}^{\frac{a}{2}} d_{C_2}(x_i)\le \frac{a^2}{4}$, we have $k\le 1$ and so $k=1$.
So $d_{C_2}(x_1)=\frac{a}{2}-1$ and $d_{C_2}(x_i)\ge \frac{a}{2}$, for $i\ge 2$.
Suppose that there exists a vertex $x'\in V(C_1)\cap V_1$ such that $d_{C_1}(x')\le a-2$.
By Claim 1 and $d(x_1)=d_{C_1}(x_1)+d_{C_2}(x_1)=a+\frac{a}{2}-1=\frac{3a}{2}-1$, we have $d(x')\ge \frac{3a}{2}$ and so $d_{C_2}(x')=d(x')-d_{C_1}(x')\ge \frac{a}{2}+2$.
Then $\frac{a^2}{4}+1=(\frac{a}{2}-1)+(\frac{a}{2}-2)\frac{a}{2}+\frac{a}{2}+2\le \sum_{i=1}^{\frac{a}{2}}d_{C_2}(x_i)\le \frac{a^2}{4}$, a contradiction.
Thus there exist two vertices $x_i, x_j\in V(C_1)\cap V_1$ such that $d_{C_1}(x_i)=a-1$ and $d_{C_1}(x_j)=a-1$. Then by Claim 1, $d_{C_2}(x_i)\ge (3a-1)-d(x_1)-d_{C_1}(x_i)=(3a-1)-(\frac{3a}{2}-1)-(a-1)=\frac{a}{2}+1$ and $d_{C_2}(x_j)\ge \frac{a}{2}+1$. Then $\frac{a^2}{4}+1=(\frac{a}{2}-1)+(\frac{a}{2}-3)\frac{a}{2}+2(\frac{a}{2}+1)\le \sum\limits_{i=1}^{\frac{a}{2}}d_{C_2}(x_i)\le \frac{a^2}{4}$, a contradiction. We have considered all cases and completed the proof of the theorem.
\end{proof}

From Theorem \ref{main result}, we can obtain the following.

\begin{theorem}\label{hamiltonian path}
Let $D$ be a strong balanced bipartite digraph of order $2a$, where $a\ge 3$. If $D$ satisfies the condition $M_{-1}$, then $D$ is traceable.
\end{theorem}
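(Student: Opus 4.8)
The plan is to invoke the main result, Theorem~\ref{main result}: a strong balanced bipartite digraph of order $2a$ with $a\ge 3$ satisfying the condition $M_{-1}$ is hamiltonian, or isomorphic to a digraph in $\mathcal{H}_1$, or isomorphic to $H_2$. A hamiltonian digraph is trivially traceable, so it remains only to exhibit a hamiltonian path in each of the two exceptional families.

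For the digraph $H_2$ of Example~\ref{example2}, I would simply check directly, using the arc list given there, that $y_1x_3y_3x_1y_2x_2$ is a hamiltonian path: the arcs $y_1\rightarrow x_3$, $x_3\rightarrow y_3$, $y_3\rightarrow x_1$, $x_1\rightarrow y_2$ and $y_2\rightarrow x_2$ are all present, and all six vertices are visited.

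For a digraph $H_1\in\mathcal{H}_1$ I would use only the ``complete bipartite'' part of its structure described in Example~\ref{example}. By (a), the induced subdigraph $H_1[R\cup W]$ is a complete bipartite digraph with parts $R$ and $W$ of sizes $\frac{a-1}{2}$ and $\frac{a+1}{2}$; hence it has a hamiltonian path of the form $w_0r_1w_1r_2\cdots r_{(a-1)/2}w_{(a-1)/2}$, both of whose endvertices lie in $W$. Similarly, by (b), $H_1[S\cup U]$ is a complete bipartite digraph with parts $S$ and $U$ of sizes $\frac{a+1}{2}$ and $\frac{a-1}{2}$, so it has a hamiltonian path $s_0u_1s_1\cdots u_{(a-1)/2}s_{(a-1)/2}$ starting (and ending) at a vertex of $S$, say $s_0$. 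Since by (c) every vertex of $W$ dominates every vertex of $S$, the terminal vertex $w_{(a-1)/2}$ of the first path dominates $s_0$; concatenating the first path, this arc, and the second path yields a hamiltonian path of $H_1$ on all $2a$ vertices.

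This argument involves no real obstacle; the only point requiring a little care is the parity bookkeeping — one must use that $|W|=|R|+1$ and $|S|=|U|+1$ in order to guarantee that the hamiltonian path of $H_1[R\cup W]$ can be made to terminate in $W$ and that the one of $H_1[S\cup U]$ can be made to start in $S$, which is exactly what makes the passage through a single $W\rightarrow S$ arc possible. Note that condition (d) and the minimum in-/out-degree requirements in Example~\ref{example} play no role here, so in fact \emph{every} digraph in $\mathcal{H}_1$ is traceable.
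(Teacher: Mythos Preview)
Your proof is correct and follows essentially the same approach as the paper: invoke Theorem~\ref{main result}, exhibit an explicit hamiltonian path in $H_2$, and for $\mathcal{H}_1$ use the complete bipartite structure of $H_1[R\cup W]$ and $H_1[S\cup U]$ to produce hamiltonian paths ending in $W$ and starting in $S$, gluing them along a $W\to S$ arc. The only differences are cosmetic (a different but equally valid hamiltonian path in $H_2$, and your explicit remark on the parity bookkeeping).
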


\begin{proof} By Theorem \ref{main result}, $D$ is either hamiltonian or isomorphic to a digraph in $\mathcal{H}_1$ or the digraph $H_2$. If $D$ is hamiltonian, there is nothing to prove. If $D$ is isomorphic to the digraph $H_2$ (see Figure 1), then $x_1y_1x_3y_3x_2y_2$ is a hamiltonian path. Suppose that $D$ is isomorphic to a digraph in $\mathcal{H}_1$ (see Example \ref{example}). Note that both $D[S\cap U]$ and $D[R\cap W]$ are complete bipartite digraphs and $|S|=|W|=\frac{a+1}{2}\ge 2$. Clearly, for any $x_1, x_2\in S$, there is a hamiltonian path $Q_1$ from $x_1$ to $x_2$ in $D[S\cap U]$ and for any $w_1, w_2\in W$, there is a hamiltonian path $Q_2$ from $w_1$ to $w_2$ in $D[R\cap W]$. Then $w_1Q_2w_2x_1Q_1x_2$ is a hamiltonian path in $D$.
\end{proof}

%
%

The bound in Theorem \ref{hamiltonian path} is sharp, as can be seen in the following example.

\begin{example}\label{D2} Let $a\geq 4$ be an even integer and let $H_3$ be a balanced bipartite digraph with partite sets $V_1$ and $V_2$ such that $V_1$ (resp. $V_2$) is a disjoint union of $S,R$ (resp. $U,W$) with $|S|=|W|=\frac{a+2}{2}$, $|U|=|R|=\frac{a-2}{2}$, and $A(H_3)$ consists of the following arcs:
\begin{itemize}
 \item [\bf ($a$)] $ry$ and $yr$, for all $r\in R$ and $y\in V_2$,
 \item [\bf ($b$)] $ux$ and $xu$, for all $u\in U$ and $x\in V_1$, and
 \item [\bf ($c$)] $ws$,          for all $w\in W$ and $s\in S$.
\end{itemize}
\end{example}
Then $d(r)=d(u)=2a$ for all $r\in R$ and $u\in U$, and $d(s)=d(w)=\frac{3a-2}{2}$ for all $s\in S$ and $w\in W$ and so $H_3$ satisfies the condition $M_{-2}$. Notice that $H_3$ is strong, but contains no hamiltonian path, as the size of a maximum matching from $V_1$ to $V_2$ is $a-2$.

\section{Related problems}

\quad Let $\mathcal{D}_{a,k}$ denote all strong balanced bipartite digraphs on $2a$ vertices such that $d(u)+d(v)\ge 3a-k$ for all non-adjacent vertices $u,v$.  If $D\in \mathcal{D}_{a,0}$, then by Theorem \ref{strong and not strong hamiltonian},  $D$ is hamiltonian. A hamiltonian digraph must possess a cycle factor. In this present paper, we have shown that if $D\in \mathcal{D}_{a,1}$ and $D$ contains a cycle factor, then $D$ is hamiltonian unless $D$ is the digraph $H_2$. A natural question would be if there are at most a finite number (depending on $k$) of digraphs in $D\in \mathcal{D}_{a,k}$ containing a cycle factor but not a hamiltonian cycle.

Theorem \ref{semicomplet bipartite cycle factor} implies that a strong bipartite tournament containing a cycle factor, is hamiltonian. Let $D$ be a balanced bipartite oriented graph of order $2a$. An another natural question would be if there exists an integer $k\ge 1$ such that $D$ satisfying the condition $d(x)+d(y)\ge 2a-k$ for any pair of non-adjacent vertices $x, y$ in $D$  and containing a cycle factor, is hamiltonian.

 To conclude the paper, we mention two related problems. In \cite{bang2},  Bang-Jensen et al. conjectured the following strengthening of a classical Meyniel theorem.

\begin{conjecture}\cite{bang2}
  If $D$ is a strong digraph on $n$ vertices in which $d(u)+d(v)\ge 2n-1$ for every pair of non-adjacent vertices $u, v$ with a common out-neighbour or a common in-neighbour, then $D$ is hamiltonian.
\end{conjecture}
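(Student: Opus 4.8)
The plan is to argue by contradiction and adapt the classical longest-cycle proof of Meyniel's theorem \cite{meyniel} to the weaker hypothesis. Suppose $D$ is strong, satisfies the stated degree condition, and is not hamiltonian. Choose a longest cycle $C=v_1v_2\cdots v_cv_1$ with $c<n$, and put $R=V(D)\setminus V(C)\neq\emptyset$. Since $D$ is strong, there is a shortest $(V(C),V(C))$-path $P=v_i=u_0u_1\cdots u_{m-1}u_m=v_j$ whose interior $u_1,\dots,u_{m-1}$ lies in $R$, with $m\ge 2$. The maximality of $C$ forbids inserting the interior vertices of $P$ into $C$, and this is the source of the non-adjacencies that drive the degree count. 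My goal is to re-run this insertion and rotation machinery while certifying, at each step where the bound $d(u)+d(v)\ge 2n-1$ is invoked, that the pair $u,v$ is non-adjacent \emph{and} possesses a common out-neighbour or a common in-neighbour, so that the weakened hypothesis still applies.

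First I would isolate the precise non-adjacent pairs to which the Meyniel argument applies the degree inequality. In the standard proof these are pairs $(u_1,w)$, where $u_1$ is the first off-cycle vertex of $P$ and $w$ ranges over vertices of $C$ determined by where $u_1$ fails to be insertable: non-insertability of $u_1$ into an arc $v_kv_{k+1}$ of $C$ says exactly that not both $v_k\to u_1$ and $u_1\to v_{k+1}$ hold. The key observation I would exploit is that consecutive vertices of $C$ furnish common neighbours for free: if both $v_k$ and $u_1$ dominate the same vertex $v_{k+1}$, then $v_{k+1}$ is a common out-neighbour of $v_k$ and $u_1$, and dually a common in-neighbour appears whenever two vertices are both dominated by a third. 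I would then show that for the pairs produced by the rotation argument such a shared neighbour is structurally forced by $P$ together with the arcs of $C$, so that these pairs satisfy the hypothesis of the conjecture and the usual count proceeds verbatim.

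For the pairs that resist this analysis, I would fall back to a structural, merging-based strategy in the spirit of Lemma \ref{cycle number}. Concretely, I would first try to establish the conjecture under the additional assumption that $D$ admits a cycle factor $C_1,\dots,C_s$: taking $s$ minimum with $s\ge 2$, any failure to merge two of the cycles forces a sparse arc pattern between them, and every vertex on a cycle is simultaneously the out-neighbour of its predecessor and the in-neighbour of its successor, which manufactures exactly the common neighbours needed to invoke the degree bound on the cross pairs. Merging would then contradict the minimality of $s$. A secondary reduction is to settle first the variant in which the bound is assumed for pairs having a common out-neighbour \emph{and} a common in-neighbour simultaneously, and then to bootstrap to the present ``or'' version by a case analysis on which of the two common neighbours is present.

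The hard part will be precisely the non-adjacent pairs $u,v$ that arise in the rotation argument but have \emph{neither} a common out-neighbour nor a common in-neighbour: for such pairs the weakened hypothesis supplies no degree information at all, so the Meyniel count cannot be closed directly. Ruling out the configurations in which such neighbour-free pairs block every longer cycle — or showing that the extremal choice of $C$ and of the bypass path $P$ can always be made to avoid them — is the genuine obstacle, and it is exactly what separates this statement from Meyniel's theorem. I expect that controlling these pairs will require exploiting strong connectivity more heavily, to produce the missing common neighbour along some short path, together with a delicate extremal choice of the longest cycle and the bypass $P$.
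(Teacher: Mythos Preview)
The statement you are attempting to prove is presented in the paper as a \emph{conjecture} (Conjecture 4.1, attributed to Bang-Jensen, Gutin and Li \cite{bang2}); the paper provides no proof of it whatsoever. It is mentioned only in the closing section on related problems, as motivation for the bipartite analogue (Theorem \ref{51}) that Adamus proved. So there is no ``paper's own proof'' to compare against: the conjecture is, as far as this paper is concerned, open.

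Your proposal is not a proof but a strategy outline, and you are candid about this: you yourself identify the crux as the non-adjacent pairs $u,v$ arising in the rotation argument that have neither a common out-neighbour nor a common in-neighbour, for which the hypothesis gives you nothing. That is precisely the obstruction, and your plan offers no mechanism for resolving it --- the final paragraph is an honest statement of where the difficulty lies, not a resolution of it. The suggestion to ``exploit strong connectivity more heavily, to produce the missing common neighbour along some short path'' is a hope rather than an argument: strong connectivity guarantees paths, but a path of length $\ge 2$ does not produce a common in- or out-neighbour for its endpoints. Likewise the cycle-factor reduction and the ``and then or'' bootstrap are stated as intentions without any indication of why they should succeed. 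In short, what you have written is a reasonable reconnaissance of the problem, but it does not close the gap that separates the conjecture from Meyniel's theorem, and the paper does not close it either.
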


In \cite{adamus2017}, Adamus proved a bipartite analogue of the conjecture.

\begin{theorem}\cite{adamus2017}\label{51}
Let $D$ be a strong balanced bipartite digraph of order $2a$ with $a\ge 3$. If $d(x)+d(y)\ge 3a$ for every pair of vertices $x, y$ with a common out-neighbour or a common in-neighbour, then $D$ is hamiltonian.
\end{theorem}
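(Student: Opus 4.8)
The plan is to imitate the proofs of Lemma~\ref{cyclefactor} and Theorem~\ref{main result}: first produce a cycle factor of $D$, then pass to a cycle factor with the fewest cycles and show that it is a single Hamilton cycle. The feature that distinguishes the present hypothesis is its \emph{local} character: the bound $d(x)+d(y)\ge 3a$ is only guaranteed when $x$ and $y$ have a common out-neighbour or a common in-neighbour. Since $D$ is bipartite, such a pair lies in one partite set; conversely, if two vertices $x,y$ of one partite set have no common out-neighbour and no common in-neighbour, then $N^+(x)\cap N^+(y)=\emptyset$ and $N^-(x)\cap N^-(y)=\emptyset$ force $d(x)+d(y)\le 2a$. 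Hence the hypothesis implies that every pair $x,y$ in one partite set satisfies $d(x)+d(y)\le 2a$ or $d(x)+d(y)\ge 3a$, and that no pair with $d(x)+d(y)<3a$ has a common neighbour; this dichotomy is what one exploits throughout.

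\emph{Cycle factor.} By Hall's theorem it suffices to rule out a set $S\subset V_1$ with $|N^+(S)|<|S|$ and, symmetrically, a set $T\subset V_2$ with $|N^+(T)|<|T|$. As in Lemma~\ref{cyclefactor}, strong connectedness forces $2\le|S|\le a-1$, so $S\ne\emptyset$ and $|V_2\setminus N^+(S)|=a-|N^+(S)|>a-|S|=|V_1\setminus S|\ge 1$. A pigeonhole argument supplies the common neighbours needed to invoke the hypothesis: if the sets $N^+(x)$, $x\in S$, were pairwise disjoint then $|N^+(S)|\ge|S|$, so some $x_1,x_2\in S$ share an out-neighbour; and since every in-neighbour of a vertex of $V_2\setminus N^+(S)$ lies in $V_1\setminus S$, which is strictly smaller than $V_2\setminus N^+(S)$, some $w_1,w_2\in V_2\setminus N^+(S)$ share an in-neighbour. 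Applying the hypothesis to these pairs, together with $d(x_i)\le|N^+(S)|+a$ and $d(w_i)\le 2a-|S|$, gives $|N^+(S)|\ge a/2$ and $|S|\le a/2$, contradicting $|N^+(S)|<|S|$. Thus $D$ has a cycle factor.

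\emph{Merging cycles.} Choose a cycle factor $C_1,\dots,C_s$ with $s$ minimum; if $s=1$ we are done, so suppose $s\ge2$ and let $C_1$ be a shortest cycle, $\overline C_1=D-V(C_1)$. By minimality no two cycles of the factor merge into a single cycle on their union, so Lemma~\ref{cycle number} applies to each pair and yields $\overleftrightarrow{a}(V(C_1),V(\overline C_1))\le\frac12|V(C_1)|(2a-|V(C_1)|)$. From here I would follow the case analysis of the proof of Theorem~\ref{main result}, splitting on $|V(C_1)|=2$ versus $|V(C_1)|\ge4$: in each case pick two vertices $x_1,x_2$ of $C_1$ in one partite set that minimize $\overleftrightarrow{a}(\{x_1,x_2\},V(\overline C_1))$, so that Lemma~\ref{number}(a) makes $d_{\overline C_1}(x_1)+d_{\overline C_1}(x_2)$ small, and then branch on the dichotomy above. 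In the branch $d(x_1)+d(x_2)\ge 3a$ the argument of Theorem~\ref{main result} applies almost verbatim and is in fact easier, since there the available bound was only $3a-1$. The branch $d(x_1)+d(x_2)\le 2a$ is new: here one uses strong connectedness and the non-mergeability of $C_1$ to locate, via pigeonhole on the arcs between $C_1$ and $\overline C_1$, a pair of vertices with a common out- or in-neighbour whose degree sum is therefore $\ge 3a$, and contradicts the bound on $\overleftrightarrow{a}(V(C_1),V(\overline C_1))$; whenever the estimates make $D$ semicomplete bipartite, Theorem~\ref{semicomplet bipartite cycle factor} finishes the job.

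The main obstacle is this merging step, and specifically the branch in which the minimizing pair has degree sum at most $2a$: unlike in Theorem~\ref{main result}, the degree hypothesis cannot be applied to an arbitrary non-adjacent pair, so each use of $d(x)+d(y)\ge 3a$ must be licensed by first exhibiting a common out- or in-neighbour, and the bookkeeping is most delicate when $C_1$ is a $2$-cycle, where $C_1$ meets each partite set in a single vertex and every qualifying pair must straddle two cycles. On the positive side, the value $3a$ (rather than $3a-1$) should eliminate the exceptional configurations of Theorem~\ref{main result}: for instance the digraph $H_2$ of Example~\ref{example2} has $d(x_1)+d(x_2)=8<9=3a$ for $a=3$ even though $x_1$ and $x_2$ share the out-neighbour $y_2$, so $H_2$ already fails the present hypothesis, and one therefore expects the conclusion to be simply ``hamiltonian'', with no exceptional family.
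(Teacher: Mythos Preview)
The paper does not prove Theorem~\ref{51}; it is quoted from \cite{adamus2017} in the ``Related problems'' section and no argument for it is given here. So there is no proof in this paper to compare your proposal against.

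On its own merits: your cycle-factor step is sound and is essentially the argument Adamus gives. The pigeonhole that manufactures a pair in $S$ with a common out-neighbour and a pair in $V_2\setminus N^+(S)$ with a common in-neighbour is exactly the right substitute for the ``apply $M_{-1}$ to an arbitrary pair'' move in Lemma~\ref{cyclefactor}, and the inequalities $|N^+(S)|\ge a/2$, $|S|\le a/2$ then close the case cleanly.

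The merging step, however, is only a sketch, and the ``new branch'' you flag---where the minimizing pair $x_1,x_2$ on $C_1$ has $d(x_1)+d(x_2)\le 2a$---is a genuine gap as written. Your plan there (``locate, via pigeonhole on the arcs between $C_1$ and $\overline C_1$, a pair with a common neighbour and contradict the arc bound'') is not enough to specify an argument: when $|V(C_1)|=2$ the cycle contributes a single vertex to each partite set, so any qualifying pair must involve a vertex of $\overline C_1$, and nothing you have said pins down such a pair or bounds its degrees. Adamus's actual proof in \cite{adamus2017} does not run through the $M_{-1}$ case analysis of Theorem~\ref{main result}; it handles the merging step directly with an argument tailored to the common-neighbour hypothesis. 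If you want to complete your route you will need to supply that branch in full rather than by analogy.
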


A natural problem is to characterize the extremal digraph on the condition in Theorem \ref{51}.

A balanced bipartite digraph containing cycles of all even length is called bipancyclic. In \cite{adamus2018}, Adamus proved that the hypothesis of  Theorem \ref{51} implies bipancyclicity of $D$, except for a directed cycle of length $2a$ (Theorem \ref{52} below).

\begin{theorem}\cite{adamus2018}\label{52}
Let $D$ be a strong balanced bipartite digraph of order $2a$ with $a\ge 3$. If $d(x)+d(y)\ge 3a$ for every pair of vertices $x, y$ with a common out-neighbour or a common in-neighbour in $D$, then $D$ either is bipancyclic or is a directed cycle of length $2a$.
\end{theorem}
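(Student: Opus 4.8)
The plan is to derive bipancyclicity from the hamiltonicity statement of Theorem~\ref{51} by a downward induction on cycle length. The engine is the elementary remark that for every vertex $v$ of $D$ the set $N^-(v)$ lies in a single partite set and any two of its members have $v$ as a common out-neighbour, so the hypothesis yields $d(x)+d(y)\ge 3a$ for all $x,y\in N^-(v)$, and symmetrically for $N^+(v)$. To begin, Theorem~\ref{51} gives a hamiltonian cycle $C=v_0v_1\cdots v_{2a-1}v_0$, so $D$ has a cycle of length $2a$; if $D$ is the directed cycle of length $2a$ we are in the exceptional conclusion, so assume otherwise. It then remains to produce cycles of every even length $2,4,\dots,2a-2$.

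I would first dispose of the $2$-cycle. If $D$ contains no $2$-cycle then $D$ is an oriented graph, so $d(v)\le a$ for every vertex $v$; but then $|N^+(v)|\ge 2$ would produce $x,y\in N^+(v)$ with common in-neighbour $v$ and $d(x)+d(y)\le 2a<3a$, contradicting the hypothesis, and likewise $|N^-(v)|\le 1$. Hence $d^+(v)=d^-(v)=1$ for all $v$, and since $D$ is strong, $D$ must then be the directed cycle of length $2a$, which is excluded. So $D$ contains a $2$-cycle, and it suffices to exhibit cycles of lengths $4,6,\dots,2a-2$.

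The core is a \emph{descent step}: if $D$ contains a cycle of length $\ell$ with $6\le\ell\le 2a$, then $D$ contains a cycle of length $\ell-2$. Granting this, iterating from $\ell=2a$ down to $\ell=6$ produces cycles of lengths $4,6,\dots,2a$, which together with the $2$-cycle shows $D$ is bipancyclic. To prove the descent step, write the $\ell$-cycle as $u_0u_1\cdots u_{\ell-1}u_0$. If the arc $u_i\to u_{i+3}$ occurs for some $i$ (indices mod $\ell$; note $u_i$ and $u_{i+3}$ lie in different partite sets, so this arc is not forbidden), then $u_iu_{i+3}u_{i+4}\cdots u_{i-1}u_i$ is a cycle of length $\ell-2$ and we are done. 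If no such ``$3$-chord'' occurs, I would argue by contradiction in the style of the proof of Theorem~\ref{main result}: assume $D$ has no cycle of length $\ell-2$, and use the degree bound on the common-neighbour pairs supplied by the cycle — pairs inside each $N^-(u_j)$ and each $N^+(u_j)$, and, when $\ell<2a$, pairs formed by a cycle vertex together with an off-cycle vertex sharing a neighbour with it — to force the relevant degree inequalities to equality; from the resulting rigid structure one then either exhibits a cycle of length $\ell-2$ directly or contradicts the strong connectedness of $D$.

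The main obstacle is precisely the ``no $3$-chord'' case of the descent step: converting the common-neighbour hypothesis into a rerouting of the cycle of the \emph{exact} length $\ell-2$, while keeping tight control of the $2a-\ell$ vertices outside the cycle when $\ell<2a$, since those vertices attach to the cycle only through common in- or out-neighbours — exactly where the hypothesis has leverage. As in Section~3, I expect this to need an extremal case analysis: suppose the $(\ell-2)$-cycle is absent, push every degree inequality until it is tight, and read off the forbidden configuration.
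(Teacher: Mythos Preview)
The paper does not prove Theorem~\ref{52}; it is quoted in Section~4 (``Related problems'') as a result of Adamus~\cite{adamus2018}, with no argument supplied. There is therefore no proof in the present paper to compare your proposal against.

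On the proposal itself: your reduction to the non-oriented case and the resulting existence of a $2$-cycle is clean and correct. The descent strategy is a natural one, and the $3$-chord shortcut is valid. However, the entire content of the theorem lies in the case you label ``the main obstacle'' and leave as a plan. Two concrete difficulties you have not addressed: first, the hypothesis constrains only pairs with a common in- or out-neighbour, and on the $\ell$-cycle two same-partite consecutive vertices $u_{i}$ and $u_{i+2}$ need not share such a neighbour (from the cycle alone, $u_{i+1}$ is an out-neighbour of $u_i$ but an in-neighbour of $u_{i+2}$), so you cannot freely invoke the degree bound along the cycle in the way the sketch suggests; second, the phrase ``push every degree inequality until it is tight, and read off the forbidden configuration'' is not an argument --- in the actual proof (in~\cite{adamus2018}) the work is in carefully locating enough vertices that \emph{do} share a common neighbour and then rerouting, and this is where the $a\ge 3$ hypothesis and the bipartite structure are genuinely used. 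As written, your descent step is a statement of intent rather than a proof, and the missing piece is exactly the substance of the theorem.
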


In the same paper, the author presented the following problem: if for every $1\le l<a$ there is an interger $k\ge 1$ such that every strong balanced bipartite digraph on $2a$ vertices contains cycles of all even lengths up to $2l$, provided $d(x)+d(y)\ge 3a-k$ for every pair of vertices $x,y$ with a common out-neighbour or a common in-neighbour.

\acknowledgements

\quad We would like to thank the anonymous referees for many valuable suggestions that allowed us to improve the exposition of our results.



\end{document}